\newcommand{\NN}{\ensuremath{\mathbb{N}}}
\newcommand{\RR}{\ensuremath{\mathbb{R}}}
\newcommand{\TT}{\ensuremath{\mathbb{T}}}
\newcommand{\ZZ}{\ensuremath{\mathbb{Z}}}
   \newtheorem{lemma}{Lemma}[section]
   \newtheorem{theorem}[lemma]{Theorem}
   \newtheorem{remark}[lemma]{Remark}
   \newtheorem{definition}[lemma]{Definition}
\numberwithin{equation}{section}
\newcommand{\R}{{\mathbb R}}
\newcommand{\um}{{u^\mu}}
\begin{document}

\subjclass[2000]{Primary: 35Q35;  Secondary: 76D05}
\keywords{3d Navier-Stokes equations, delayed equations, uniqueness, global weak solutions.
}

\title[On 3D Navier-Stokes equations: regularization and uniqueness by delays.]
{On 3D Navier-Stokes equations: \\regularization and uniqueness by delays. }

\author{Hakima Bessaih}
 \address[Hakima Bessaih]{Department of  Mathematics\\
 University of Wyoming\\
Laramie 82071 USA}
\email[Hakima Bessaih]{bessaih@uwyo.edu}

\author{Mar\'{\i}a J. Garrido-Atienza}\address[Mar\'{\i}a J. Garrido-Atienza]{Dpto. Ecuaciones Diferenciales y An\'alisis Num\'erico\\
Facultad de Matem\'aticas, Universidad de Sevilla, Avda. Reina Mercedes, s/n, 41012-Sevilla, Spain} \email[Mar\'{\i}a J. Garrido-Atienza]{mgarrido@us.es}

\author{Bj{\"o}rn Schmalfu{\ss }}
\address[Bj{\"o}rn Schmalfu{\ss }]{Institut f\"{u}r Stochastik\\
Friedrich Schiller Universit{\"a}t Jena, Ernst Abbe Platz 2, 77043\\
Jena,
Germany
 }
\email[Bj{\"o}rn Schmalfu{\ss }]{bjoern.schmalfuss@uni-jena.de}

\begin{abstract} A modified version of the three dimensional Navier-Stokes equations is considered with periodic boundary conditions. A bounded constant delay is introduced into the convective term, that produces a regularizing effect on the solution. In fact, by assuming appropriate regularity on the initial data, the solutions of the delayed equations are proved to be regular and, as a consequence, existence and also uniqueness of a global weak solution is obtained. Moreover, the associated flow is constructed and the continuity of the semigroup is proved. Finally, we investigate the passage to the limit on the delay, obtaining that the limit is a weak solution of the Navier-Stokes equations.
\end{abstract}

\maketitle
\maketitle

\section*{}

\section{Introduction}\label{s1} The incompressible Navier-Stokes equations are described by time evolution of the velocity $u$ in a bounded or unbounded domain of $\mathbb{R}^n, \ n=2, 3$  and are given by:
\begin{align*}
 u^\prime(t,x) & +(u(t,x)\cdot\nabla) u(t,x)-\nu\Delta u(t,x)+\nabla p(t,x)=0, \\
   & {\rm div}\, u(t,x)=0,\quad u(0,x)=u_0(x),
\end{align*}
where $\nu>0$ is the viscosity of the fluid, $p$ denotes the pressure and $u_0(x)$ denotes the initial datum. The existence and uniqueness of solutions is known to hold for $n=2$ while only partially solved in the case $n=3$.  In fact,  existence of global weak solutions is known since the seminal work of Leray \cite{leray},  while their uniqueness and regularity is still an open question.  The difficulty in dimension 3 comes from the nonlinear term $(u\, \cdot\, \nabla) u$ that  requires more regularity.
 In fact, this regularity is not satisfied by the energy estimates while it is in dimension 2.
 In particular, the lack of this regularity is essentially the reason for which the uniqueness  cannot  be proved for weak solutions. \\

 Many regularizations have been considered into the nonlinear term to overcome this difficulty.  Depending on the way the regularization is introduced, yields to a wide range of models. We refer to \cite{FHT} for the so-called Camassa-Holm equations, to \cite{CHOT, OT} for the Leray-alpha models  and the references therein. Another way of regularizing the equation is to add a damping term consisting of a monotone operator, see for example \cite{BTZ, KZ, MTT}. A different model that takes into account some physical aspects are the rotating flows, see for e.g. \cite{BMN, CDGG-00, CDGG-06}. \\

 In this paper, we will introduce a delay in the nonlinear term $(u\, \cdot\, \nabla) u$. We consider the following modified version of the 3D Navier-Stokes equations with constant delay $\mu>0$:
\begin{align}\label{delay-i}
\begin{split}
  u^\prime(t,x) & +(u(t-\mu, x)\cdot\nabla)u(t,x)-\nu\Delta u(t,x)+\nabla p(t,x)=f(x), \\
   & {\rm div}\, u(t,x)=0,\quad u(0,x)=u_0(x),\quad u(\tau,x)=\phi(\tau,x),\quad \tau\in [-\mu,0).
\end{split}
\end{align}
As we will prove later, this delay introduces a regularizing  effect in the equations and allows to prove the uniqueness of weak solutions.
In \cite{GP-2011}, with a time variable bounded  delay function in the convective term, the authors investigated the existence of weak solutions and the existence and uniqueness of strong solutions with their longtime behavior. For a more general delay (infinite delay of convolutional type) the same authors investigated mild solutions in \cite{GP-2015},  however, these solutions are only local in time. In \cite{V-08}, Varnhorn considered a similar delay used in equation $\eqref{delay-i}$ and investigated the existence and uniqueness of strong solutions in a bounded and regular domain $D\subset\mathbb{R}^3$ with Dirichlet boundary conditions. In that article, the initial delay function  $\chi$ is assumed to satisfy that $\chi\in C([-\mu,0],H^2(D))$,  with $\frac{\partial \chi}{\partial t} \in C([-\mu,0],L_2(D))$ and ${\rm div}\  \chi=0$. With these assumptions, the author proved the existence and uniqueness of strong solutions. Our setting is different since we are dealing with weak solutions. Indeed, our initial function $\chi=(\phi,u_0)\in L_2(-\mu,0,V^{1+\alpha})\times V^\alpha$ with $\alpha>1/2$ (see the functional setting in Section \ref{s2}) and our domain has periodic boundary conditions. This choice makes possible to apply Lemma \ref{l2} which is specific for periodic boundary conditions. As we will see later, the use of this lemma in  the nonlinear term in (\ref{delay-i}) is crucial to get better estimates that improve the regularity of solutions.\\

In this paper, our focus will be on the existence and uniqueness of global weak solutions, although our method would include dealing with strong solutions when $\alpha\geq 1$ (as we said above, throughout the paper $\alpha>1/2$). The main ingredient to establish it,  as we mentioned earlier, is to use the regularizing effect of the delay on the convective term. Indeed, first we will investigate the linearized version \eqref{eq1} of system \eqref{delay-i}. This equation comes naturally when investigating  the system on the interval
$[0,\mu] $. We prove existence and uniqueness of weak solutions, then we establish that these solutions are  more regular and are in the spaces $V^\alpha$ (see Section \ref{s3} for details). Using the linearized equation, we are able to construct weak solutions for equation \eqref{delay-i} by glueing the solutions obtained on each interval $[0,\mu], [\mu, 2\mu],\dots $ and so on. Each solution is obtained from the previous step and uses the linearized construction from Section \ref{s3}. With this procedure, we are also able to prove the existence of a continuous semigroup $S(t)$, see Section  \ref{s5} for details. The semigroup theory would allow to study the longtime behavior in terms of global attractors, invariant manifolds, and so on, however these topics are not investigated in the current paper and will be considered in a future research.

Furthermore, by passing to the limit on the delay, we prove that our regularized solutions converge to a weak solution of  the 3D Navier-Stokes equations.  This passage to the limit on the delay was also addressed by Varnhom in  \cite{V-08}. \\

The contain of the paper is as follows: in Section \ref{s2} we introduce the functional spaces and one key result related to a suitable estimate of the trilinear form in the Navier--Stokes equations. In Section \ref{s3}, the linearized version of (\ref{delay-i}) is investigated. Existence and uniqueness of a solution as well as its regularity are obtained. The Navier--Stokes equations with constant delay are addressed in the rest of the paper. In Section \ref{s4} we construct the weak solution by using a concatenation procedure based on the linearized equations. The generation of a continuous semigroup in the phase space $V^\alpha$ is obtained in Section \ref{s5} thanks to the uniqueness of the weak solution. Finally, in Section \ref{s6} we address the passage to the limit when the delay $\mu$ goes to zero, obtaining in the limit the Navier--Stokes equations.

\section{Functional Setting}\label{s2}
First of all, we introduce the functional setting in which our investigations will be carried out. For a more exhaustive description of the setting, we refer the reader to \cite{Fursikov}, Chapter 3, from Page 152.

Let us consider the torus $\TT^3_L$ in $\R^3$ of length $L$ given by the set $\{ x=(x_1,x_2,x_3)\in \RR^3 \,:\, -L/2\leq x_i \leq L/2, \, x_i=-L/2 \mbox{ is identified with  } x_i=L/2, \, i=1,2,3\}$. Consider $L$-periodic functions $\psi(x)$ that can be expanded into Fourier series
$$\psi(x)=\sum_{\zeta\in \ZZ^3_L} e^{i(x,\zeta)} \hat \psi(\zeta),$$
where
$$\ZZ^3_L=\{ \zeta=(\zeta_1,\zeta_2,\zeta_3)\,:\, \zeta_i=2\pi k_i/L,\, k_i \mbox { is an integer}, \, i=1,2,3\},$$
and
$$\hat \psi(\zeta)=L^{-3} \int_{\TT^3_L} e^{-(y,\zeta)}\psi(y)dy$$
denote the Fourier coefficients of $\psi$.

For $s\in \RR$, we denote by $H^s(\TT^3_L)$ the Sobolev space of $L$--periodic functions such that $\hat \psi(\zeta)=\overline{\hat \psi(-\zeta)}$ equipped with the norm
$$\|\psi\|_s= \bigg(\sum_{\zeta\in \ZZ^3_L} (1+|\zeta|^2)^s |\hat \psi(\zeta)|^2\bigg)^\frac12.$$
When $\hat \psi(0)=0$ the corresponding subspace is denoted by $\dot H^s(\TT^3_L)$ with equivalent norm
$$\bigg(\sum_{\zeta\in \ZZ^3_L\setminus \{0\}} |\zeta|^{2s} |\hat \psi(\zeta)|^2\bigg)^{1/2}.$$
In particular, these spaces are Hilbert--spaces
with the inner product
\begin{equation*}
  (\psi_1,\psi_2)_s =\sum_{\zeta\in \ZZ^3_L\setminus \{0\}}|\zeta|^{2s} \psi_1(\zeta)\overline{\hat\psi_2(\zeta)}.
\end{equation*}
%For $s=0$ this is the space $L_2(\TT_L^3)^3$.
\medskip
We denote by $\dot{\mathbb H}^s(\mathbb T_L^3)=\dot H^s(\TT^3_L)^3$ and introduce the spaces
\begin{align*}
V^1=&\{u\in \dot{\mathbb H}^1(\mathbb T_L^3), \, \rm{div}\ u=0 \},\\
V^0=&\{u\in \dot{\mathbb H}^0(\mathbb T_L^3), \, \rm{div}\ u=0 \},\\
V^{-1}=&\{u\in \dot{\mathbb H}^{-1}(\mathbb T_L^3)=(\dot{\mathbb H}^1(\mathbb T_L^3))^\prime, \, \rm{div}\ u=0 \}.
\end{align*}
Then $V^{-1}$ is the dual space of $V^1$ and $V^1\subset V^0 \subset V^{-1}$ where the injections are continuous and each space is dense in the following one. We shall denote by $(\cdot,\cdot)$ the scalar product in $V^0$.

We introduce the Stokes operator $A$ as in \cite{temamP}, Section 2.2. Page 9, with domain of $A$ given by
$$D(A)=\{u\in V^0, \, \Delta u\in V^0 \}.$$
Then for the periodic boundary conditions we have
\begin{equation*}
  Au=-\Delta u.
\end{equation*}
The operator $A$ can be seen as an unbounded positive linear selfadjoint operator on $V^0$, and we can define the powers $A^s$, $s\in \RR$ with domain $D(A^s)$. We set $V^{s}=D(A^{s/2})$, that is a closed subspace of $\dot {\mathbb H}^s(\TT_L^3)$, then
$$V^s=\{u\in \dot{\mathbb H}^s(\mathbb T_L^3), \, \rm{div}\ u=0 \}$$
and the norms $\|A^{s/2}u\|_0$ and $\|u\|_s$ are equivalent on $V^s$.  The operator $A$ defines an isomorphism from $V^s$ to $V^{s-2}$. In addition, $A$ has a positive countable spectrum of finite multiplicity $0<\lambda_1\le \lambda_2\le \cdots$, $\lambda_j \to \infty$, and the associated eigenvectors $e_1,e_2,\cdots$ form a complete orthogonal system in $V^s$.

When $s_1<s_2$,  the embedding $V^{s_2}\subset V^{s_1}$ is compact and dense. The space $V^{-s}$ is the dual space of $V^s$ for $s\in\RR$, see Temam \cite{temamP}, from page 9.

We will denote by $\langle \cdot, \cdot \rangle$ the duality product between $V^s$ and $V^{-s}$ no matter the value of $s\in \RR$.\\

Let us consider the trilinear form which describes one of the coefficients of the Navier-Stokes equations:

\begin{equation*}
  b(u,v,w)=\sum_{i,j=1}^3\int_{\TT_L^3} u_j \frac{\partial v_i}{\partial x_j}w_idx.
\end{equation*}

To investigate this trilinear form we use the following multiplication lemma, proved in Fursikov \cite{Fursikov}, Lemma 3.4.4, page 153 \footnote{Note that this result holds for more general periodic functions than those of $H^s(\TT^3_L)$, see Remark \ref{remark1} below.}:
\begin{lemma}\label{l1}
Let $r_3 \geq 0$. For $\psi \in H^{r_1}(\TT_L^3)$ and $\varphi \in H^{r_2}(\TT_L^3)$, there exists a constant $c>0$ such that
$$\| \psi \varphi \|_{r_3} \leq c \|\psi\|_{r_1} \|\varphi\|_{r_2},$$
if either $r_1>r_3$, $r_2>r_3$, $r_1+r_2 \geq r_3+3/2$ or $r_1\geq r_3$, $r_2\geq r_3$, $r_1+r_2 > r_3+3/2$.
\end{lemma}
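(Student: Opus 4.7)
The plan is to work with Fourier coefficients on the torus, since the norm $\|\cdot\|_{s}$ is diagonalised by the Fourier series. Expanding $\psi=\sum_\eta \hat\psi(\eta)e^{i(x,\eta)}$ and $\varphi=\sum_\zeta\hat\varphi(\zeta)e^{i(x,\zeta)}$, the product $\psi\varphi$ is again $L$-periodic and its Fourier coefficients are given by the discrete convolution
\[
\widehat{\psi\varphi}(\xi)=\sum_{\eta\in\ZZ^{3}_{L}}\hat\psi(\eta)\,\hat\varphi(\xi-\eta),
\]
so that $\|\psi\varphi\|_{r_3}^{2}=\sum_{\xi}(1+|\xi|^{2})^{r_3}|\widehat{\psi\varphi}(\xi)|^{2}$. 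This turns the estimate into a purely arithmetic statement about weighted convolutions on the lattice $\ZZ^{3}_{L}$.

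Next I would apply the Cauchy--Schwarz inequality to the inner sum after inserting the weights $(1+|\eta|^{2})^{r_1/2}(1+|\xi-\eta|^{2})^{r_2/2}$ and their reciprocals, then sum in $\xi$ and swap the outer sum with the inner one by Fubini (all terms are nonnegative). The outcome is the bound
\[
\|\psi\varphi\|_{r_3}^{2}\,\le\,\bigl(\sup_{\xi\in\ZZ^{3}_{L}}M(\xi)\bigr)\,\|\psi\|_{r_1}^{2}\,\|\varphi\|_{r_2}^{2},\qquad M(\xi):=(1+|\xi|^{2})^{r_3}\sum_{\eta}\frac{1}{(1+|\eta|^{2})^{r_1}(1+|\xi-\eta|^{2})^{r_2}},
\]
so the problem is reduced to proving that $M$ is uniformly bounded on $\ZZ^{3}_{L}$.

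The key technical step is the Peetre-type decay estimate $\sum_{\eta}(1+|\eta|^{2})^{-r_1}(1+|\xi-\eta|^{2})^{-r_2}\le C(1+|\xi|^{2})^{-\gamma}$ with $\gamma=\min\{r_1,r_2,r_1+r_2-3/2\}$. To prove it I split the lattice into the three regions $R_1=\{|\eta|\le|\xi|/2\}$, $R_2=\{|\xi-\eta|\le|\xi|/2\}$, and $R_3=\ZZ^{3}_{L}\setminus(R_1\cup R_2)$. On $R_1$ one has $|\xi-\eta|\ge|\xi|/2$, hence $(1+|\xi-\eta|^{2})^{-r_2}\le C(1+|\xi|^{2})^{-r_2}$, and the remaining partial sum is compared with $\int_{|y|\le|\xi|/2}(1+|y|^{2})^{-r_1}\,dy$, contributing a bounded factor when $r_1>3/2$ and a factor $|\xi|^{3-2r_1}$ when $r_1<3/2$; $R_2$ is handled symmetrically in the roles of $r_1$ and $r_2$; on $R_3$ both weights give $(1+|\xi|^{2})^{-r_1-r_2}$, while a dyadic decomposition of the $\eta$-annulus produces a volume factor bounded by $C|\xi|^{3}$, yielding the exponent $r_1+r_2-3/2$. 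Collecting the three contributions and taking the worst decay rate gives the stated $\gamma$, and the two alternative sets of hypotheses of the lemma are precisely what ensure $r_3\le\gamma$.

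The main obstacle is the careful bookkeeping at the borderline values of the exponents, where logarithmic losses threaten to appear, e.g.\ when one of the $r_i$ equals $3/2$, or when $r_i=r_3$ so that the extracted factor $(1+|\xi|^{2})^{r_3-r_i}$ is only of order one. This is exactly why the lemma is stated as two alternative clusters of inequalities, each containing a strict inequality at just the right spot to absorb the potentially critical logarithmic factor arising from the corresponding region $R_1$, $R_2$, or $R_3$. Once that bookkeeping is carried out, the rest of the argument is a routine combination of Cauchy--Schwarz, Fubini, and comparison of sums with integrals on $\RR^{3}$.
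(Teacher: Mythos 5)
First, note that the paper itself does not prove this lemma: it is quoted verbatim from Fursikov \cite{Fursikov}, Lemma 3.4.4, p.~153, so your Fourier-side argument is being compared with a citation rather than with an in-paper proof. Your strategy --- write $\widehat{\psi\varphi}$ as a lattice convolution, apply Cauchy--Schwarz after inserting the weights $(1+|\eta|^2)^{r_1/2}(1+|\xi-\eta|^2)^{r_2/2}$, and reduce everything to the uniform boundedness of the Schur quantity $M(\xi)$ via a Peetre-type three-region estimate --- is the standard self-contained route, and it does work on most of the admissible parameter range.

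There is, however, a genuine gap at one endpoint that the statement explicitly allows: the first alternative with $r_3=0$ and $r_1+r_2=3/2$ (for instance $r_1=1$, $r_2=1/2$, $r_3=0$). There the summand of $M(\xi)$ decays like $|\eta|^{-2(r_1+r_2)}=|\eta|^{-3}$ as $|\eta|\to\infty$, so the sum over the unbounded part of your region $R_3$ --- the set $\{|\eta|>2|\xi|\}$, which your ``volume factor $C|\xi|^{3}$'' bookkeeping silently ignores, since $R_3$ is not an annulus --- diverges, and $M(\xi)\equiv+\infty$. This is not one of the logarithmic losses you flag, and no strict inequality in the hypotheses absorbs it: the symmetric Schur test simply cannot prove $H^{r_1}\cdot H^{r_2}\subset L_2$ when $r_1+r_2=3/2$. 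The estimate is nevertheless true; it follows from the Sobolev embeddings $H^{r_i}(\TT_L^3)\subset L^{p_i}$ with $1/p_i=1/2-r_i/3$ together with H{\"o}lder's inequality, since $1/p_1+1/p_2=1/2$. So to complete the proof you must either handle this endpoint by a separate argument (Sobolev embedding plus H{\"o}lder, or an asymmetric splitting of the convolution according to $|\eta|\le|\xi-\eta|$ versus $|\eta|>|\xi-\eta|$ with the weight transferred to the high-frequency factor), or restrict the Schur-test argument to $r_1+r_2>3/2$ --- which, as it happens, covers every application of the lemma actually made later in the paper. For the remaining borderline configurations (some $r_i=3/2$, or equality $r_1+r_2=r_3+3/2$ with $r_3>0$) your accounting is correct: the strict inequalities $r_i>r_3$ in the first alternative, respectively $r_1+r_2>r_3+3/2$ in the second, are exactly what kill the logarithms.
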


As a consequence, we can establish the following result concerning the domain of $b$:

\begin{lemma}\label{l2}
The trilinear form $b$ can be continuously  extended to $V^{s_1}\times V^{s_2+1}\times V^{s_3}$  for $s_i\in\RR$ if
either $s_i+s_j\ge 0$ for $i\not=j$, $s_1+s_2+s_3>3/2$ or $s_i+s_j> 0$ for $i\not=j$, $s_1+s_2+s_3\ge 3/2$. Therefore, under either of the previous settings, there exists a constant $c$ depending only on $s_i$ such that
\begin{equation*}
  |b(u,v,w)|\le c\|u\|_{s_1}\|v\|_{s_2+1}\|w\|_{s_3}
\end{equation*}
for $u\in V^{s_1},\,v\in V^{s_2+1},\,w\in V^{s_3}$.
\end{lemma}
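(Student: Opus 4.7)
The plan is to split off one factor of $b(u,v,w)$ via a duality pairing and then apply the multiplicative estimate of Lemma \ref{l1} to the remaining product. Writing the trilinear form componentwise,
\begin{equation*}
b(u,v,w)=\sum_{i,j=1}^{3}\int_{\TT_{L}^{3}}(u_{j}w_{i})\,\partial_{j}v_{i}\,dx,
\end{equation*}
I would first observe that $\partial_{j}v_{i}\in H^{s_{2}}(\TT_{L}^{3})$ with $\|\partial_{j}v_{i}\|_{s_{2}}\le\|v\|_{s_{2}+1}$, and invoke the duality pairing between $H^{-s_{2}}$ and $H^{s_{2}}$ to obtain
\begin{equation*}
|b(u,v,w)|\le c\,\|v\|_{s_{2}+1}\sum_{i,j}\|u_{j}w_{i}\|_{-s_{2}}.
\end{equation*}

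Next I would apply Lemma \ref{l1} to each product $u_{j}w_{i}$ with the assignment $r_{1}=s_{1}$, $r_{2}=s_{3}$, $r_{3}=-s_{2}$. A direct translation shows that the first alternative $r_{1}>r_{3},r_{2}>r_{3},r_{1}+r_{2}\ge r_{3}+3/2$ becomes $s_{1}+s_{2}>0$, $s_{2}+s_{3}>0$, $s_{1}+s_{2}+s_{3}\ge 3/2$, while the second alternative becomes $s_{1}+s_{2}\ge 0$, $s_{2}+s_{3}\ge 0$, $s_{1}+s_{2}+s_{3}>3/2$. Both are subsumed by the hypotheses of Lemma \ref{l2}, so $\|u_{j}w_{i}\|_{-s_{2}}\le c\|u\|_{s_{1}}\|w\|_{s_{3}}$, and the desired estimate $|b(u,v,w)|\le c\,\|u\|_{s_{1}}\|v\|_{s_{2}+1}\|w\|_{s_{3}}$ follows. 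The continuous extension of $b$ to the full space $V^{s_{1}}\times V^{s_{2}+1}\times V^{s_{3}}$ is then obtained by density, exploiting that the trigonometric polynomials are dense in each $V^{s}$.

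The main obstacle is that Lemma \ref{l1} is stated only for $r_{3}\ge 0$, i.e.\ for $s_{2}\le 0$. Two remedies are available. First, Lemma \ref{l1} extends by duality to $r_{3}<0$ in the mean-zero framework of $\dot H^{s}(\TT_{L}^{3})$, in the direction alluded to by the footnote attached to that lemma. Alternatively, one exploits the symmetry of the hypotheses: the pairwise conditions $s_{i}+s_{j}\ge 0$ force at most one of $s_{1},s_{2},s_{3}$ to be negative, so one may choose to pull out the factor corresponding to whichever index is $\le 0$ (either $u_{j}$ against the product $(\partial_{j}v_{i})w_{i}$, or $w_{i}$ against $u_{j}\partial_{j}v_{i}$), always placing the exponent $r_{3}$ in the range where Lemma \ref{l1} is literally applicable. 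In the residual case in which every $s_{i}$ is strictly positive, one applies Lemma \ref{l1} with a small admissible $r_{3}\ge 0$ compatible with $r_{3}\ge-s_{2}$ and uses the embedding $V^{s_{2}+1}\hookrightarrow V^{1-r_{3}}$. In all sub-cases the arithmetic conditions on $(s_{1},s_{2},s_{3})$ are exactly what the hypotheses of Lemma \ref{l2} provide.
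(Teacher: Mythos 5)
Your core argument --- isolate the factor whose Sobolev index is nonpositive, pair it by duality against the product of the other two, and apply Lemma \ref{l1} with $r_3\ge 0$ --- is exactly the paper's proof in the case where one of the $s_i$ is negative: the pairwise conditions $s_i+s_j\ge 0$ guarantee there is at most one such index, and the arithmetic translation of the hypotheses of Lemma \ref{l1} that you carry out is the same computation the paper performs with $r_1=s_j$, $r_2=s_k$, $r_3=-s_{i^*}\ge 0$, followed by density of trigonometric polynomials. For that case your proposal is correct and matches the paper.

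The gap is in the residual case where all three indices are strictly positive. Your remedy (b), ``apply Lemma \ref{l1} with a small admissible $r_3\ge 0$,'' fails there: take $s_1=s_2=s_3=0.51$, which satisfies the hypotheses since the sum is $1.53>3/2$. Whichever factor you pull out, Lemma \ref{l1} applied to the remaining product with target exponent $r_3\ge 0$ requires the sum of the two remaining indices to be at least $r_3+3/2\ge 3/2$, whereas that sum is only $1.02$; no choice of pairing or of $r_3\ge 0$ works, because $H^{0.51}\cdot H^{0.51}$ only lands in $H^{-0.48}$. Your remedy (a), extending Lemma \ref{l1} to $r_3<0$ ``by duality,'' would close the case, but that extension is not a formal consequence of Lemma \ref{l1}: writing $\|uw\|_{-s_2}=\sup_{\|h\|_{s_2}\le 1}|\int uwh|$ throws you back onto a product estimate with a negative target exponent (e.g.\ $\|uh\|_{-s_3}$), i.e.\ onto precisely the trilinear bound you are trying to prove; and the footnote attached to Lemma \ref{l1} concerns the generality of the function classes, not negative $r_3$. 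The paper avoids this by disposing of the all-nonnegative case with a citation to \cite{temamP}, Lemma 2.1, page 12 (proved by H\"older's inequality together with the Sobolev embeddings $H^{s}\subset L^p$), and only running the duality-plus-multiplication argument when exactly one index is negative. You need either that citation or an independent proof of the multiplication estimate with negative target exponent to complete the argument.
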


\begin{proof}
We consider only the case in which $s_i+s_j\ge 0$ for $i\not=j$, with $s_1+s_2+s_3>3/2$, since the proof for the other case follows analogously.

Note that if the $s_i\geq 0$, $i=1,2,3$, then the result follows by \cite{temamP}, Lemma 2.1, Page 12 (see also \cite{Fursikov}, Lemma 4.4.6, Page 158 for the homogeneous Dirichlet boundary conditions case).

Hence, assume that there exists only one $i*\in \{1,2,3\}$ such that $s_{i*}<0$ (the uniqueness comes from the fact that $s_i+s_j\ge 0$ for $i\not=j$). In a first step, let us consider trigonometric polynomials functions $(u_1,u_2,u_3)\in  V^{s_1}\times V^{s_2+1}\times V^{s_3}$. Then the integral of the product $u_1 u_2u_3$ exists. Indeed, using a general notation, we obtain
\begin{align*}
\bigg| \int_{\TT_L^3} u_1 u_2u_3 dx \bigg|&=\bigg| \int_{\TT_L^3} u_{i*} u_j u_k dx \bigg|\\
&\leq \|u_{i*}\|_{s_{i*}} \|u_j u_k\|_{-s_{i*}} \leq c \|u_{i*}\|_{s_{i*}} \|u_j\|_{s_j}\|u_k\|_{s_{k}}
\end{align*}
where in the last inequality we have taken $r_1=s_j$, $r_2=s_k$ and $r_3=-s_{i*}$, that according to the assumptions, are such that $r_1\geq r_3$, $r_2\geq r_3$ and $r_1+r_2>3/2+r_3$, so that we can apply Lemma \ref{l1}. The proof is therefore complete, since one of the above factors corresponds to a derivative, hence this yields to the +1 that appears in the statement of Lemma \ref{l2}.
Finally, it suffices to remind that the trigonometric polynomials functions are dense in the spaces $V^s$, hence the proof is complete.
\end{proof}

\begin{remark}\label{remark1}
Similar results were proved by Fursikov \cite{Fursikov} when considering a bounded domain $\Omega \subset \RR^3$, $\partial \Omega \in C^\infty$ with homogeneous Dirichlet conditions, but with more restrictive assumptions. In Lemma 3.4.2, Page 150, with the additional assumptions $s_i\geq 0$, and in Lemma 3.4.6, Page 157, under the additional requirement that the parameters $s_i>-1/2$ for $i=1,2,3$. In fact, these last constraints ensure that the spaces $H^{s/2}$ and $V^s$ coincide in the homogeneous Dirichlet conditions case. On the other hand, in the periodic boundary setting, for a similar result as Lemma \ref{l2} see also \cite{temamP}, Lemma 2.1, Page 12, where the additional assumptions are $s_i\geq 0$.
\end{remark}

From $b$ we can derive a bilinear operator $B:V^{s_1}\times V^{s_2+1} \rightarrow V^{-s_3}$ given by
\begin{align*}
& \langle B(u,v),w\rangle = b(u,v,w),
\end{align*}
such that
\begin{align}\label{eq7}
\begin{split}
& \|B(u,v)\|_{-s_3}\le c_B\|u\|_{s_1}\|v\|_{s_2+1}
\end{split}
\end{align}
with $s_1,\,s_2,\,s_3$ satisfying the conditions of  Lemma \ref{l2}. Note that in the following we shall denote the constant $c_B$ simply by $c$, a positive constant that may change from line to line.
\medskip

The following property of $b$ is well-known:

\begin{lemma}\label{l5}
Suppose that $u,\,v,\,w\in V^1$. Then we have $ b(u,v,w)=-b(u,w,v)$, hence
$$b(u,v,v)=0.$$
\end{lemma}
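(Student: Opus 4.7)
The plan is to establish the antisymmetry $b(u,v,w) = -b(u,w,v)$ via integration by parts, exploiting simultaneously the divergence-free condition $\mathrm{div}\, u = 0$ and the periodic boundary conditions on $\TT_L^3$ (which eliminate boundary terms). Once this identity is in hand, setting $w = v$ yields $b(u,v,v) = -b(u,v,v)$, forcing $b(u,v,v) = 0$.

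First I would prove the identity on the dense subspace of trigonometric polynomials inside $V^1$ (equivalently, smooth divergence-free $L$-periodic vector fields with vanishing mean), where every manipulation is justified classically. For such smooth $u, v, w$, integration by parts in the variable $x_j$ produces no boundary contribution by periodicity, giving
\begin{equation*}
b(u,v,w) \;=\; -\sum_{i,j=1}^3 \int_{\TT_L^3} \bigl(\partial_{x_j} u_j\bigr) w_i\, v_i\, dx \;-\; \sum_{i,j=1}^3 \int_{\TT_L^3} u_j\, \bigl(\partial_{x_j} w_i\bigr) v_i\, dx.
\end{equation*}
The first double sum vanishes because $\sum_j \partial_{x_j} u_j = \mathrm{div}\, u = 0$, and what remains is precisely $-b(u,w,v)$.

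The second step is to upgrade the identity to arbitrary $u, v, w \in V^1$ by density. By Lemma \ref{l2} applied with $s_1 = s_3 = 1$ and $s_2 = 0$ (so $s_i + s_j \ge 0$ for $i \neq j$ and $s_1+s_2+s_3 = 2 > 3/2$), the form $b$ extends jointly continuously to $V^1 \times V^1 \times V^1$. Since the trigonometric polynomials with zero mean are dense in $V^1$, the identity $b(u,v,w) = -b(u,w,v)$ passes to the limit, and specialising $w = v$ gives the conclusion.

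The proof presents no genuine obstacle; the only point to verify is that the continuity estimate from Lemma \ref{l2} applies symmetrically in the last two arguments, so that the extension is consistent with the antisymmetry derived on the dense subspace, which is clear since all three arguments live in the same space $V^1$.
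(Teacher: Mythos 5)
Your proof is correct and follows the standard route: the paper itself gives no argument for this lemma, simply declaring it well-known, and what you have written (integration by parts on the dense subspace of divergence-free trigonometric polynomials, using periodicity to kill boundary terms and $\mathrm{div}\,u=0$ to kill the extra term, followed by the continuity of $b$ on $V^1\times V^1\times V^1$ from Lemma \ref{l2} with $s_1=s_3=1$, $s_2=0$ to pass to the limit) is precisely the classical justification the authors are implicitly invoking.
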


Finally we mention that for $\mu>0$ the spaces $L_\infty(0,\mu,V^\alpha),\,L_2(0,\mu,V^\alpha),\,C([0,\mu],V^\alpha)$ and $C^\beta([0,\mu],V^\alpha),\,\beta\in (0,1)$, have
the usual meanings.

\section{The 3D linearized Navier--Stokes equations}\label{s3}

In the sequel, we consider the 3D linearized Navier--Stokes equations with periodic boundary conditions over the torus  $\TT_L^3$ in $\RR^3$
\begin{align}\label{eq1}
\begin{split}
  u^\prime(t,x) & +(\psi(t),\nabla )u(t)-\nu\Delta u(t)+\nabla p(t,x)=f(x), \\
   & {\rm div}\, u(t,x)=0,\quad u(0)=u_0(x).
\end{split}
\end{align}
Let us emphasize that these equations are a simpler version of the 3D Navier--Stokes equations, since the term $(u,\nabla)u$ has been replaced by $(\psi,\nabla)u$, where $\psi$ will have a suitable regularity. As we will see below, the analysis of the existence, uniqueness and regularity of solutions to (\ref{eq1}) will play an important role to further deal with the delayed Navier--Stokes equations.

\subsection{Existence and Uniqueness of weak solutions}

Assume from now on that $\alpha$ is a real parameter such that $\alpha>1/2$. In virtue of the well-known Helmholtz-projection, which is commonly applied to the Navier-Stokes equations in order to eliminate the pressure, the meaning of a weak solution to (\ref{eq1}) is understood as follows:

\begin{definition}\label{d1}
For $\alpha>1/2$, let $\mu>0$, $\psi$ be a fixed element in $L_2(0,\,\mu,V^{1+\alpha})$, $u_0\in V^0$ and $f\in V^{-1}$.
We say that $u$ is a weak solution to \eqref{eq1} on $(0,\mu)$ if $u\in L_2(0,\mu,V^1)$ and
\begin{equation}\label{eq2}
  \frac{d}{dt}\langle u(t),v\rangle=\langle f-\nu Au(t)-B(\psi(t),u(t)),v\rangle,\qquad u(0)=u_0,
\end{equation}
for any $v\in V^1$, or
\begin{equation*}
  \frac{du(t)}{dt}=f-\nu Au(t)-B(\psi(t),u(t)),\qquad u(0)=u_0,
\end{equation*}
as an equation in $V^{-1}$.
\end{definition}
Notice that the last definition makes sense because the right hand side belongs to $L_1(0,\mu,V^{-1})$. Indeed we have $Au(\cdot)\in L_2(0,\mu,V^{-1})$ since
\begin{equation}\label{e1}
 \int_0^\mu \|Au(r)\|_{-1}^2dr=  \int_0^\mu \|A^{-1/2} Au(r)\|_{0}^2dr= \int_0^\mu \|u(r)\|_{1}^2dr<\infty,
 \end{equation}
and, thanks to \eqref{eq7},
\begin{equation*}
 \|B(\psi(t),u(t))\|_{-1}\le c  \|\psi(t)\|_{1}\|u(t)\|_1.
\end{equation*}
%Furthermore, such regularity of the derivative gives sense to the initial condition.
\bigskip

The time derivative in the above definition has to be understood in the distributional sense, hence multiplying \eqref{eq2} by a test function
$\varphi\in C_0^\infty([0,\mu])$, by integration we obtain
\begin{align}\label{eq3}
\begin{split}
  -\int_0^\mu \langle u(r),v \rangle \varphi^\prime(r)dr & +\nu\int_0^\mu\langle A^\frac12 u(r),A^\frac12 v\rangle\varphi(r)dr +\int_0^\mu b(\psi(r),u(r),v)\varphi(r)dr\\
  =& \int_0^\mu\langle f,v\rangle \varphi(r)dr.
\end{split}
\end{align}

\begin{lemma}\label{l3}
For $\alpha>1/2$, assume that $\psi \in L_2(0,\mu,V^{1+\alpha})$. Then the equation \eqref{eq2} has a (weak) solution in $L_\infty(0,\mu,V^0)\cap L_2(0,\mu,V^1)\cap C^\gamma([0,\mu],V^{-s}),\, 0\leq \gamma\leq 1/2,\,s\ge 1$. Moreover, $u$ is weakly continuous in $V^0$.
\end{lemma}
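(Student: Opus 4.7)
The plan is to implement a standard Faedo-Galerkin scheme, substantially simplified by the fact that (\ref{eq1}) is linear in the unknown $u$ once $\psi$ is fixed. Let $\{e_j\}$ denote the eigenbasis of the Stokes operator $A$, set $V_n=\mathrm{span}\{e_1,\ldots,e_n\}$, and let $P_n$ be the orthogonal projection onto $V_n$. I first solve on $V_n$ the finite-dimensional linear ODE $u_n'+\nu Au_n+P_nB(\psi,u_n)=P_nf$ with $u_n(0)=P_nu_0$; since $\psi\in L_2(0,\mu;V^{1+\alpha})$, the coefficient map is measurable with $L_2$-in-time norm, so a Carath\'eodory solution exists on all of $[0,\mu]$.

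Testing the Galerkin equation against $u_n(t)$ and invoking Lemma \ref{l5} (valid since $\psi(t)\in V^{1+\alpha}\subset V^1$ for a.e.~$t$) to cancel the convective contribution, Young's inequality yields the standard energy estimate $\|u_n(t)\|_0^2+\nu\int_0^t\|u_n(r)\|_1^2\,dr\le\|u_0\|_0^2+\nu^{-1}\int_0^t\|f\|_{-1}^2\,dr$, uniformly in $n$, placing $u_n$ in $L_\infty(0,\mu;V^0)\cap L_2(0,\mu;V^1)$. The crucial step is to bound $u_n'$ in a negative-index space: applying Lemma \ref{l2} with $s_1=1+\alpha$, $s_2=-1$, $s_3=1$ (the pairwise sums are nonnegative and $s_1+s_2+s_3=1+\alpha>3/2$ thanks to $\alpha>1/2$) gives $\|B(\psi(t),u_n(t))\|_{-1}\le c\|\psi(t)\|_{1+\alpha}\|u_n(t)\|_0$, so that $B(\psi,u_n)$ and hence $u_n'$ are bounded in $L_2(0,\mu;V^{-1})$ uniformly in $n$. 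This is essentially the only place where the assumption $\alpha>1/2$ is used.

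By Banach-Alaoglu together with the Aubin-Lions lemma, a subsequence of $u_n$ converges weakly-$*$ in $L_\infty(V^0)$, weakly in $L_2(V^1)$, and strongly in $L_2(V^0)$ to a limit $u$, with $u_n'\rightharpoonup u'$ in $L_2(V^{-1})$. Passage to the limit in the weak formulation (\ref{eq3}) is routine: the linear terms pose no difficulty, and for the trilinear term Lemma \ref{l2} with $(s_1,s_2,s_3)=(1+\alpha,0,0)$ gives $|b(\psi,u,v)|\le c\|\psi\|_{1+\alpha}\|u\|_1\|v\|_0$, so the functional $u\mapsto\int_0^\mu b(\psi(r),u(r),v)\varphi(r)\,dr$ is continuous and linear on $L_2(V^1)$ and weak convergence of $u_n$ suffices. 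Finally, $u\in L_2(V^1)$ together with $u'\in L_2(V^{-1})$ yields $u\in C([0,\mu];V^0)$ by the standard trace embedding, from which the claimed weak continuity in $V^0$ follows; for the H\"older regularity in $V^{-s}$ with $s\ge 1$, Cauchy-Schwarz gives $\|u(t)-u(\tau)\|_{-s}\le|t-\tau|^{1/2}\|u'\|_{L_2(V^{-s})}$, hence $u\in C^{1/2}([0,\mu];V^{-s})$ and the intermediate exponents follow by interpolation. The principal obstacle throughout is to locate a triple of exponents in Lemma \ref{l2} that reconciles the purely spatial regularity of $\psi$ (only square-integrable in time) with the weak Galerkin bounds on $u_n$; the assignment $s_1=1+\alpha$, $s_2=-1$, $s_3=1$ is exactly what makes the convective term integrable in time.
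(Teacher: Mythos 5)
Your proof is correct, and its skeleton is the same as the paper's: a Galerkin scheme, the energy estimate obtained by cancelling the convective term via Lemma \ref{l5}, and --- crucially --- the application of Lemma \ref{l2} with $s_1=1+\alpha$, $s_2=-1$ to place $B(\psi,u_m)$ in a negative-order space with a bound $c\|\psi\|_{1+\alpha}\|u_m\|_0$ that is square-integrable in time; this is exactly where the paper also spends the hypothesis $\alpha>1/2$. You diverge in two places, both legitimately. First, for compactness the paper bounds the approximations directly in $C^{1/2}([0,\mu],V^{-s})$ and invokes the Dubinskij theorem, which delivers strong convergence in $C([0,\mu],V^{-s})$ and hence the identification $u(0)=u_0$ and the H\"older bound for free; you instead bound $u_n'$ in $L_2(0,\mu,V^{-1})$ and use Aubin--Lions, recovering the $C^\gamma([0,\mu],V^{-s})$ regularity a posteriori from $\|u(t)-u(\tau)\|_{-s}\le |t-\tau|^{1/2}\|u'\|_{L_2(0,\mu,V^{-s})}$ (here ``interpolation'' for $\gamma<1/2$ is just $|t-\tau|^{1/2}\le\mu^{1/2-\gamma}|t-\tau|^{\gamma}$, and you should still say a word about identifying $u(0)=u_0$, e.g.\ by integrating by parts against a test function with $\varphi(0)\neq 0$). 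Second, for the limit in the convective term you exploit the linearity of the equation in $u$: the map $u\mapsto\int_0^\mu b(\psi,u,v)\varphi\,dr$ is a bounded linear functional on $L_2(0,\mu,V^1)$ by Lemma \ref{l2} with $(s_1,s_2,s_3)=(1+\alpha,0,0)$, so weak convergence suffices, whereas the paper uses the strong $L_2(0,\mu,V^0)$ convergence and the estimate $|b(\psi,u_m-u,v)|\le c\|\psi\|_{1+\alpha}\|u_m-u\|_0\|v\|_s$. Your route has the bonus that $u\in L_2(0,\mu,V^1)$ with $u'\in L_2(0,\mu,V^{-1})$ immediately gives $u\in C([0,\mu],V^0)$, which is stronger than the weak continuity claimed here and anticipates what the paper only establishes in Lemma \ref{l3b}.
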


\begin{proof}
Let denote by $\pi_m$ the projection on the linear subspace of $V^0$ given by the span of the eigenvectors $e_1,\cdots,e_m$, which are supposed to have norm one in $V^0$.  Consider the Galerkin--approximations $u_m(t)\in \pi_mV^0$ to \eqref{eq2}, then
 \begin{equation*}
  u_m^\prime(t)+\nu Au_m(t)+ \pi_m B(\psi(t),u_m(t))=\pi_m f,\quad u_m(0)=\pi_m u_0.
\end{equation*}
By standard methods we obtain the  following a priori estimate
\begin{equation*}
  \|u_m\|_{L_\infty(0,\mu,V^0)}^2+\nu\|u_m\|_{L_2(0,\mu,V^1)}^2\le \|u_0\|_0^2+\frac{1}{\nu}\|f\|_{-1}^2.
\end{equation*}
In addition, for $s\ge 1$ and $0\leq \tau<t\leq \mu$, we have
\begin{align*}
  \|u_m(t)-u_m(\tau)\|_{-s}\le&\nu (t-\tau)^\frac12\bigg(\int_0^\mu\|Au_m(r)\|_{-s}^2dr\bigg)^\frac12\\
  &+(t-\tau)^\frac12\bigg(\int_0^\mu\|\pi_m B(\psi(r),u_m(r))\|_{-s}^2dr\bigg)^\frac12.
\end{align*}
On the other hand,
\begin{equation*}
  \int_0^\mu\|Au_m(r)\|_{-s}^2dr=\int_0^\mu\|A^{-s/2}Au_m(r)\|_{0}^2dr=\int_0^\mu\|u_m(r)\|_{-s+2}^2dr\le c\|u_m\|_{L_2(0,\mu,V^1)}^2,
\end{equation*}
for an appropriate embedding constant $c$ (note that $V^1\subseteq V^{-s+2}$ because $s\geq 1$). Furthermore, by Lemma \ref{l1}, with the choice $s_1=1+\alpha$, $s_2=-1,\,s_3=s\ge 1>3/2-\alpha$ (where this last inequality follows by the assumption $\alpha >1/2$), we obtain
\begin{align*}
  \|B(\psi(r),u_m(r))\|_{-s}\le c\|\psi(r)\|_{1+\alpha}\|u_m(r)\|_{0}.
\end{align*}
Hence the sequence $(u_m)_{m\in\NN}$ is uniformly bounded in $C^\frac12([0,\mu],V^{-s})\cap L_2(0,\mu,V^1)$.
By the Dubinskij-theorem, see Vishik and Fursikov \cite{FurVis} Chapter 4, Theorem 4.1,  $(u_m)_{m\in\NN}$ is relatively compact in $L_2(0,\mu,V^0)\cap C([0,\mu],V^{-s})$, weak relatively compact in $L_2(0,\mu,V^1)$ and weak star relatively compact in $L_\infty(0,\mu,V^0)$. Standard arguments now allow us to conclude that the limit point of this sequence satisfies \eqref{eq3}
in a weak sense. Indeed, the following equations are equivalent to the Galerkin approximations
\begin{align}\label{gal}
\begin{split}
  -\int_0^t \langle u_m(r),v \rangle \varphi^\prime(r)dr+&\nu\int_0^t \langle A^{1/2}u_m(r),A^{1/2}v\rangle\varphi(r)dr+\int_0^t \pi_m b(\psi(r),u_m(r),v)\varphi(r)dr \\
  &=(\pi_m u_0, v)\varphi(0)-(u_m(t),v)\varphi(t)+\int_0^t \langle \pi_m f,v\rangle \varphi(r)dr,
  \end{split}
\end{align}
for $v\in V^s$ and $\varphi\in C^\infty([0,t])$, for every $t\in [0,\mu]$.
In particular, there exists a subsequence, also denoted by $(u_m)_{m\in \NN}$ and an element $u\in L_\infty(0,\mu,V^0)\cap L_2(0,\mu,V^1)\cap C([0,\mu],V^{-s})$ such that
\begin{align*}
  &\lim_{m\to \infty}\|u_m-u\|_{L_2(0,\mu,V^0)}=0,\\
  &\lim_{m\to \infty}\|u_m-u\|_{C([0,\mu],V^{-s})}=0,\\
  w-&\lim_{m\to \infty}u_m=u\quad \text{in }L_2(0,\mu,V^1),\\
 w-\ast &\lim_{m\to \infty}u_m=u\quad \text{in }L_\infty(0,\mu,V^0),\\
  &\lim_{m\to\infty}\|u_m(t)-u(t)\|_{-s}=0,\quad t\in [0,\mu].
\end{align*}
The last limit shows in particular that $u(0)=u_0$. In addition, it follows easily
\begin{align*}
&\lim_{m\to\infty}\int_0^t \langle u_m(r),v \rangle \varphi^\prime(r)dr=\int_0^t \langle u(r),v \rangle \varphi^\prime(r)dr\\
  &\lim_{m\to\infty} \int_0^t \langle A^\frac12 u_m(r),A^\frac12 v\rangle\varphi(r)dr=\int_0^t \langle A^\frac12 u(r),A^\frac12 v\rangle \varphi(r)dr, \\
  &\lim_{m\to\infty} \int_0^t \pi_m b(\psi(r),u_m(r),v)\varphi(r)dr=\int_0^t  b(\psi(r),u(r),v)\varphi(r)dr.
\end{align*}
In the last limit we have used that for $v\in \pi_m V^s$
\begin{equation*}
  | b(\psi(r),u_m(r),v)\varphi(r)- b(\psi(r),u(r),v)\varphi(r)|\le \|\psi(r)\|_{1+\alpha}\|u_m(r)-u(r)\|_0\|v\|_s|\varphi(r)|
\end{equation*}
for a.e. $t\in [0,\mu]$  which follows by Lemma \ref{l2}.

Finally, the weak continuity of $u$ in $V^0$ follows by Lemma 1.4, Chapter 3, Page 263 in \cite{Temam}, since in particular $u\in L_\infty(0,\mu,V^0) \cap  C([0,\mu],V^{-s})$ and $V^0 \subset V^{-s}$ with a continuous injection.
 \end{proof}

\begin{remark}
Some of the steps of the previous proof are based on Lemma \ref{l2}, which is valid for boundary periodic conditions. Moreover, as a consequence of (\ref{gal}), taking a test function $\varphi\in C^\infty([0,t])$, the weak solution of (\ref{eq2}) satisfies
\begin{align}\label{gal1}
\begin{split}
  -\int_0^t \langle u(r),v \rangle \varphi^\prime(r)dr & +\nu\int_0^t\langle A^\frac12 u(r),A^\frac12 v\rangle\varphi(r)dr +\int_0^t b(\psi(r),u(r),v)\varphi(r)dr\\
  =& (u_0,v)\varphi(0)-(u(t),v)\varphi(t)+\int_0^t\langle f,v\rangle \varphi(r)dr,
\end{split}
\end{align}
for every $t\in [0,\mu]$.
\end{remark}

In the following lemma, we establish an energy equality which will be essential to prove the uniqueness of solutions to (\ref{eq1}). It relies on the fact that the solution is such that $u^\prime \in L_2(0,\mu,V^{-1})$. This last property is well--known for the 2D Navier Stokes equations, see for example \cite{temamP}, Theorem 3.1, Page 21, but it is unknown for the 3D case, although for our linearized system (\ref{eq1}) holds true.

\begin{lemma}\label{l3b}
Suppose that $u_0\in V^0$ and $\psi \in L_2(0,\mu,V^{1+\alpha})$. Then, any weak solution $u$ to \eqref{eq1} satisfies the energy equality
\begin{equation*}
  \|u(t)\|_0^2+2\nu\int_0^t\|u(r)\|_1^2dr=\|u_0\|^2_0+2\int_0^t\langle f,u(r)\rangle dr
\end{equation*}
for $t\in [0,\mu]$. In addition $u\in C([0,\mu],V^0)$.
\end{lemma}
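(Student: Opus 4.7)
The plan is to apply the standard Lions chain-rule / Hilbert-triple argument (as in Temam, Chapter 3, Lemma 1.2) to the triple $V^{1}\hookrightarrow V^{0}\hookrightarrow V^{-1}$. For this to work, the critical ingredient is to show that $u^{\prime}\in L_{2}(0,\mu,V^{-1})$, which requires a careful estimate of the convective-type term $B(\psi,u)$ in the dual norm.

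First, I would estimate $B(\psi(t),u(t))$ in $V^{-1}$ by invoking Lemma \ref{l2} with the choice $s_{1}=1+\alpha$, $s_{2}=-1$, $s_{3}=1$. The compatibility conditions reduce to $s_{1}+s_{2}=\alpha\ge 0$, $s_{2}+s_{3}=0\ge 0$, $s_{1}+s_{3}=2+\alpha\ge 0$, and $s_{1}+s_{2}+s_{3}=1+\alpha>3/2$, all of which follow from $\alpha>1/2$. This yields
\begin{equation*}
\|B(\psi(t),u(t))\|_{-1}\le c\,\|\psi(t)\|_{1+\alpha}\|u(t)\|_{0}.
\end{equation*}
Because $\psi\in L_{2}(0,\mu,V^{1+\alpha})$ and, by Lemma \ref{l3}, $u\in L_{\infty}(0,\mu,V^{0})$, the product on the right is in $L_{2}(0,\mu)$, so $B(\psi,u)\in L_{2}(0,\mu,V^{-1})$. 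Combining this with the bound $\|Au\|_{L_{2}(0,\mu,V^{-1})}\le \|u\|_{L_{2}(0,\mu,V^{1})}$ from \eqref{e1} and $f\in V^{-1}$, the equation \eqref{eq2} gives $u^{\prime}\in L_{2}(0,\mu,V^{-1})$.

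Having $u\in L_{2}(0,\mu,V^{1})$ and $u^{\prime}\in L_{2}(0,\mu,V^{-1})$, the classical Lions lemma yields $u\in C([0,\mu],V^{0})$ together with the chain rule
\begin{equation*}
\frac{d}{dt}\|u(t)\|_{0}^{2}=2\langle u^{\prime}(t),u(t)\rangle\qquad\text{a.e.\ }t\in[0,\mu].
\end{equation*}
Substituting $u^{\prime}=f-\nu Au-B(\psi,u)$ and integrating from $0$ to $t$ produces
\begin{equation*}
\|u(t)\|_{0}^{2}+2\nu\int_{0}^{t}\|u(r)\|_{1}^{2}dr=\|u_{0}\|_{0}^{2}+2\int_{0}^{t}\langle f,u(r)\rangle dr-2\int_{0}^{t}\langle B(\psi(r),u(r)),u(r)\rangle dr.
\end{equation*}
To finish, I would invoke Lemma \ref{l5}: since $\psi(r)\in V^{1+\alpha}\subset V^{1}$ and $u(r)\in V^{1}$ for a.e.\ $r$, the antisymmetry $b(\psi,u,u)=0$ holds, which kills the last integral and delivers the stated energy equality.

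The main obstacle is exactly the first step: naively one might try $\|B(\psi,u)\|_{-1}\le c\|\psi\|_{1+\alpha}\|u\|_{1}$, but then the right-hand side is only in $L_{1}(0,\mu)$, which is not enough to apply the Hilbert-triple chain rule. The trick is to push the spatial derivative onto $\psi$ (whose $V^{1+\alpha}$ regularity, with $\alpha>1/2$, is strictly better than $V^{1}$) so that $u$ only appears in the $L_{\infty}$-in-time norm $V^{0}$; this is precisely what the flexible formulation of Lemma \ref{l2} over the torus, in contrast to the Dirichlet case of Remark \ref{remark1}, permits, and it is the whole reason weak solutions of the linearized system behave much better than those of the 3D Navier--Stokes system.
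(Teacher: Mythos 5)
Your proposal is correct and follows essentially the same route as the paper's proof: the same choice $s_1=1+\alpha$, $s_2=-1$, $s_3=1$ in Lemma \ref{l2} to get $\|B(\psi,u)\|_{-1}\le c\|\psi\|_{1+\alpha}\|u\|_0$ and hence $u'\in L_2(0,\mu,V^{-1})$, then Temam's Lemma 1.2 (Chapter 3) for the continuity and the chain rule, and Lemma \ref{l5} to annihilate the trilinear term. Your closing remark about why the naive bound $c\|\psi\|_{1+\alpha}\|u\|_1$ only gives $L_1$ integrability is exactly the point the paper exploits implicitly.
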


\begin{proof}
We remark that we already know that $u^\prime \in L_1(0,\mu, V^{-1})$ (see the comment after Definition \ref{d1}). We want to prove now that this derivative is more regular, namely $u^\prime \in L_2(0,\mu, V^{-1})$. The term $Au \in L_2(0,\mu, V^{-1})$ thanks to (\ref{e1}). Moreover,
\begin{equation*}
  \int_0^\mu \|B(\psi(r),u(r))\|_{-1}^2dr\le c\|\psi\|_{L_2(0,\mu,V^{1+\alpha})}^2 \|u\|_{L_\infty(0,\mu,V^0)}^2<\infty,
\end{equation*}
which follows by Lemma \ref{l2} for $s_1=1+\alpha,\,s_2=-1,\,s_3=1$. As a consequence,
\begin{equation*}
  u\in L_2(0,\mu,V^1),\quad u^\prime \in L_2(0,\mu,V^{-1})
\end{equation*}
hence, Lemma 1.2, Chapter 3, Page 260 of \cite{Temam} ensures that $u\in C([0,\mu],V^0)$ and fulfills
\begin{equation*}
\frac{d}{dt}\|u(t)\|_0^2=2\langle \frac{du(t)}{dt},u (t)\rangle,
\end{equation*}
where the time derivatives are given in the distributional sense. This together with Lemma \ref{l5} above imply
\begin{equation}\label{eq9}
  \frac12\frac{d}{dt}\|u(t)\|_0^2+\nu\|u(t)\|_1^2=\langle f,u(t)\rangle.
\end{equation}
Integrating this equality, the energy equality follows easily.

\end{proof}
As an immediate consequence of the previous result, in the next lemma we can establish the uniqueness of weak solutions to (\ref{eq1}).
\begin{lemma}\label{l4}
Suppose that the assumptions of Lemma \ref{l3} hold. Then, there exists only one weak solution to \eqref{eq1}.
\end{lemma}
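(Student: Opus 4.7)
The plan is to exploit the linearity of \eqref{eq1} in $u$: given two weak solutions $u_1,u_2$ to \eqref{eq1} sharing the same initial datum $u_0$, I would set $w=u_1-u_2$ and observe that $w$ is itself a weak solution of the same linearized system, but now with forcing $f\equiv 0$ and initial datum $w(0)=0$. Since $\psi\in L_2(0,\mu,V^{1+\alpha})$ is unchanged, the hypotheses of Lemma \ref{l3} and Lemma \ref{l3b} apply to $w$.

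The second step is to invoke Lemma \ref{l3b} directly to $w$. Its proof shows $w^\prime\in L_2(0,\mu,V^{-1})$ and that $w$ satisfies the pointwise-in-time identity $\tfrac12 \tfrac{d}{dt}\|w(t)\|_0^2+\nu\|w(t)\|_1^2=\langle 0,w(t)\rangle -b(\psi(t),w(t),w(t))$. Since $\psi(t)\in V^{1+\alpha}\subset V^1$ and $w(t)\in V^1$ for a.e.\ $t\in[0,\mu]$, Lemma \ref{l5} yields $b(\psi(t),w(t),w(t))=0$ almost everywhere, so integrating from $0$ to $t$ gives the energy identity
\begin{equation*}
\|w(t)\|_0^2+2\nu\int_0^t\|w(r)\|_1^2 dr = \|w(0)\|_0^2 = 0.
\end{equation*}
Hence $w(t)=0$ in $V^0$ for every $t\in[0,\mu]$, which proves $u_1\equiv u_2$.

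The only point that requires any care, and what I view as the crux, is justifying the energy equality for the difference $w$. The standard 3D obstruction (insufficient integrability of $u^\prime$) is absent here because in \eqref{eq1} the convective term is \emph{linear} in the unknown: using Lemma \ref{l2} with $s_1=1+\alpha$, $s_2=-1$, $s_3=1$ (admissible since $\alpha>1/2$), one obtains $\|B(\psi(r),w(r))\|_{-1}\le c\|\psi(r)\|_{1+\alpha}\|w(r)\|_{0}$, which combined with $\psi\in L_2(0,\mu,V^{1+\alpha})$ and $w\in L_\infty(0,\mu,V^0)$ yields $B(\psi,w)\in L_2(0,\mu,V^{-1})$. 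Together with $Aw\in L_2(0,\mu,V^{-1})$ this puts $w^\prime$ in $L_2(0,\mu,V^{-1})$, so that Lions' lemma (Lemma 1.2 of \cite{Temam}, Ch.~3) legitimately produces the chain rule needed for the energy equality. The rest of the argument reduces to the antisymmetry property of Lemma \ref{l5} and a Gronwall-free conclusion, as $f=0$ and $w(0)=0$.
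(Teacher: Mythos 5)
Your proof is correct and follows essentially the same route as the paper: form the difference $w=u^1-u^2$, note it solves the linearized equation with zero data, verify $w'\in L_2(0,\mu,V^{-1})$ via the estimate $\|B(\psi,w)\|_{-1}\le c\|\psi\|_{1+\alpha}\|w\|_0$ from Lemma \ref{l2}, and apply the energy equality of Lemma \ref{l3b} together with the antisymmetry of Lemma \ref{l5} to conclude $w\equiv 0$. You spell out the justification of the energy identity in somewhat more detail than the paper, which simply says ``by the same reasoning as Lemma \ref{l3b},'' but the argument is identical in substance.
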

\begin{proof}
Let $u^1$ and $u^2$ be two weak solutions to \eqref{eq1} corresponding to the same elements $\psi$, $u_0$ and $f$. Then, by Lemma \ref{l3}, $u^1-u^2 \in L_2(0,\mu,V^1)$. Furthermore,
$$\frac{d}{dt}(u^1(t)-u^2(t))=-\nu A(u^1(t)-u^2(t))-B(\psi(t), u^1(t)-u^2(t)),$$
hence, by the same reasoning as Lemma \ref{l3b}, $\frac{d}{dt}(u^1-u^2)\in L_2(0,\mu,V^{-1})$, with energy equality given by
\begin{equation*}
  \|u^1(t)-u^2(t)\|_0^2+2\nu\int_0^t\|u^1(r)-u^2(r)\|_1^2dr=0.
\end{equation*}
This shows the uniqueness of solutions to (\ref{eq1}).
\end{proof}

\subsection{Further regularity properties}

In what follows, assuming more regularity of the initial condition $u_0$ and the external force $f$, we are going to show that we obtain more regularity for the weak solution $u$ of (\ref{eq1}). This extra regularity of $u$ will be essential to further prove the uniqueness of solutions to the delay system (\ref{delay-i}), see Section \ref{s4}.

\begin{lemma}\label{l2b}
Assume that $\psi \in  L_2(0,\mu,V^{1+\alpha})$, $u_0\in V^\alpha$ and $f\in V^{\alpha-1}$. Then the equation \eqref{eq2} has a (weak) solution in $L_\infty(0,\mu,V^\alpha)\cap L_2(0,\mu,V^{1+\alpha})$.
\end{lemma}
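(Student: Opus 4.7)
The plan is to rerun the Galerkin scheme from the proof of Lemma~\ref{l3} but derive this time an a priori bound at the higher level $V^\alpha$, pass to the limit by weak compactness, and identify the limit with the (already constructed) unique weak solution via Lemma~\ref{l4}. Since $u_0\in V^\alpha$ and the spectral projections $\pi_m$ are bounded in every $V^s$, the initial data $\pi_m u_0$ are uniformly bounded in $V^\alpha$, which provides a starting point for the higher-order estimate.

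Concretely, for the Galerkin solutions $u_m\in \pi_m V^0$ I would take the inner product in $V^0$ of the Galerkin equation with $A^\alpha u_m(t)$. Since $\pi_m$ commutes with $A$, this produces
\begin{equation*}
 \tfrac12\tfrac{d}{dt}\|u_m(t)\|_\alpha^2+\nu\|u_m(t)\|_{1+\alpha}^2=-b(\psi(t),u_m(t),A^\alpha u_m(t))+\langle f,A^\alpha u_m(t)\rangle,
\end{equation*}
where we use the equivalence $\|\cdot\|_\alpha \sim \|A^{\alpha/2}\cdot\|_0$. The key step is the bound on the trilinear term, obtained from Lemma~\ref{l2} with
\begin{equation*}
 s_1=1+\alpha,\qquad s_2=\alpha-1,\qquad s_3=1-\alpha.
\end{equation*}
All pairwise sums are non-negative, and $s_1+s_2+s_3=1+\alpha$ exceeds $3/2$ precisely because $\alpha>1/2$. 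Since $\|A^\alpha u_m\|_{1-\alpha}$ is equivalent to $\|u_m\|_{1+\alpha}$, this yields $|b(\psi,u_m,A^\alpha u_m)|\le c\|\psi\|_{1+\alpha}\|u_m\|_\alpha \|u_m\|_{1+\alpha}$, while the forcing term is controlled by $|\langle f,A^\alpha u_m\rangle|\le c\|f\|_{\alpha-1}\|u_m\|_{1+\alpha}$ through the duality between $V^{\alpha-1}$ and $V^{1-\alpha}$.

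Young's inequality then absorbs the two $\|u_m\|_{1+\alpha}$ factors into $\nu\|u_m\|_{1+\alpha}^2$ and leaves a Gr\"onwall-type differential inequality
\begin{equation*}
 \tfrac{d}{dt}\|u_m(t)\|_\alpha^2+\nu\|u_m(t)\|_{1+\alpha}^2\le \tfrac{c}{\nu}\|\psi(t)\|_{1+\alpha}^2\,\|u_m(t)\|_\alpha^2+\tfrac{c}{\nu}\|f\|_{\alpha-1}^2,
\end{equation*}
whose coefficient $\|\psi(t)\|_{1+\alpha}^2$ is integrable on $[0,\mu]$. Gr\"onwall's lemma therefore delivers uniform bounds of $u_m$ in $L_\infty(0,\mu,V^\alpha)\cap L_2(0,\mu,V^{1+\alpha})$. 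Extracting weak-$*$ and weak convergent subsequences in these spaces, the limit point belongs to $L_\infty(0,\mu,V^\alpha)\cap L_2(0,\mu,V^{1+\alpha})$ and, by the same passage to the limit as in Lemma~\ref{l3}, satisfies \eqref{eq2}; Lemma~\ref{l4} then identifies it with the weak solution of Lemma~\ref{l3}, which consequently enjoys the claimed extra regularity.

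The main technical obstacle is precisely the trilinear estimate above: the target $V^{\alpha+1}$-regularity essentially forces $s_3=1-\alpha$, and admissibility of the triple in Lemma~\ref{l2} hinges on $s_1+s_2+s_3>3/2$, which is exactly the meaning of $\alpha>1/2$. This is also why the periodic boundary setting (Remark~\ref{remark1}) is essential, since the Dirichlet analogue imposes the stricter restriction $s_i>-1/2$ which would rule out this choice of indices for larger values of $\alpha$.
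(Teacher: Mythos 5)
Your proposal is correct and follows essentially the same route as the paper: Galerkin approximation tested against $A^\alpha u_m$, the trilinear estimate from Lemma~\ref{l2}, Young and Gr\"onwall, then weak/weak-$*$ compactness and identification of the limit. The only cosmetic difference is your index choice $(s_1,s_2,s_3)=(1+\alpha,\alpha-1,1-\alpha)$ in Lemma~\ref{l2} where the paper uses $(1+\alpha,\alpha,-\alpha)$; both are admissible under the hypotheses of that lemma and yield the identical bound $c\|\psi\|_{1+\alpha}\|u_m\|_{\alpha}\|u_m\|_{1+\alpha}$.
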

\begin{proof}
For the Galerkin approximations $u_m$, considering its scalar product with $A^\alpha u_m(t)$ in the space $V^0$, we obtain
\begin{align*}
\frac{d}{2dt}\|u_m(t)\|_\alpha ^2+\nu\|u_m(t)\|_{\alpha +1}^2\le |b(\psi(t),u_m(t),A^\alpha u_m(t))|+|\langle f, A^\alpha  u_m(t)\rangle|.
\end{align*}

Hence, by choosing in Lemma \ref{l2} $s_1=1+\alpha ,\,s_2=\alpha $ and $s_3=-\alpha $ (the conditions of that lemma hold since $\alpha >1/2$), we have
\begin{align*}\label{eq4}
\begin{split}
  \frac{d}{dt}\|u_m(t)\|_\alpha ^2+2\nu\|u_m(t)\|_{\alpha +1}^2&\le 2c\|\psi(t)\|_{1+\alpha }\|u_m(t)\|_{1+\alpha }\|u_m(t)\|_\alpha +\frac{2}{\nu}\|f\|_{\alpha-1 }^2+\frac{\nu}{2}\|u_m(t)\|_{\alpha +1}^2\\
  &\le \frac{c^2}{\nu}\|\psi(t)\|_{1+\alpha }^2\|u_m(t)\|_{\alpha }^2+\nu\|u_m(t)\|_{\alpha +1}^2+ \frac{2}{\nu}\|f\|_{\alpha-1 }^2
  +\frac{\nu}{2}\|u_m(t)\|_{\alpha +1}^2
\end{split}
\end{align*}
and therefore
\begin{equation}\label{eq4}
  \frac{d}{dt}\|u_m(t)\|_\alpha ^2+\frac{\nu}{2}\|u_m(t)\|_{\alpha +1}^2\le\frac{c^2}{\nu}\|\psi(t)\|_{1+\alpha }^2\|u_m(t)\|_{\alpha }^2+ \frac{2}{\nu}\|f\|_{\alpha-1 }^2.
\end{equation}
By Gronwall's Lemma we first obtain
\begin{equation}\label{eq5}
   \sup_{m\in\NN}\|u_m\|_{L_\infty(0,\mu,V^\alpha)}<\infty.
\end{equation}
Indeed, Gronwall's Lemma gives us the estimate
\begin{equation*}
  \|u_m(t)\|_\alpha^2\le \|u_0\|_\alpha^2e^{\frac{c^2}{\nu}\int_0^t\|\psi(r)\|_{1+\alpha}^2dr}+\frac{2}{\nu}\|f\|_{\alpha-1}^2\int_0^t e^{\frac{c^2}{\nu}\int_\tau^t\|\psi(r)\|_{1+\alpha}^2dr}d\tau
\end{equation*}
where the right hand side is independent of $m$.
Then substituting (\ref{eq5}) into (\ref{eq4}) and integrating we have
\begin{equation*}
   \sup_{m\in\NN}\|u_m\|_{L_2(0,\mu ,V^{1+\alpha})}<\infty.
\end{equation*}
As a consequence, there exists a subsequence of the sequence of Galerkin approximations that is converging to $\tilde u$ weakly-star in $L_\infty(0,\mu, V^\alpha)$ and weakly in $L_2(0,\mu,V^{1+\alpha})$. On the other hand, we know from Lemma \ref{l2} that we can extract a subsequence converging to $u\in L_\infty(0,\mu,V^0)\cap L_2(0,\mu,V^1)$.  Therefore, taking into account the dense embedding of $V^\alpha$ onto $V^0$ (and hence the dense embedding $L_2(0,\mu,V^\alpha)$ onto $L_2(0,\mu,V^0)$) we have $\tilde u=u$ in $L_\infty(0,\mu,V^0)$. Similarly, we can identify $\tilde u$ and $u$ in $L_2(0,\mu,V^1)$.
\end{proof}

Let us now prove  the continuity of $u(t)$ in $V^\alpha$.

\begin{lemma}\label{l5b}
Let $\psi\in L_2(0,\mu,V^{1+\alpha})$, $u_0\in V^\alpha$ and $f\in V^{\alpha-1}$. Then $u \in C([0,\mu],V^\alpha)$.
\end{lemma}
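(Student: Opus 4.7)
The plan is to reduce the continuity assertion to an application of the standard Lions--Magenes / Temam trace theorem (Lemma 1.2, Chapter 3 of \cite{Temam}), for which it suffices to establish that
\begin{equation*}
u \in L_2(0,\mu,V^{\alpha+1}) \quad \text{and} \quad u' \in L_2(0,\mu,V^{\alpha-1}).
\end{equation*}
The first inclusion is exactly what Lemma \ref{l2b} already provides, so the real work will be to upgrade the known regularity of the time derivative, namely to show $u'\in L_2(0,\mu,V^{\alpha-1})$, by controlling each term on the right-hand side of the equation
\begin{equation*}
u'(t) = f - \nu A u(t) - B(\psi(t),u(t))
\end{equation*}
in the space $L_2(0,\mu,V^{\alpha-1})$.

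For the linear part, $f\in V^{\alpha-1}$ is constant in time and hence trivially belongs to $L_2(0,\mu,V^{\alpha-1})$, and since $A$ is an isomorphism from $V^{\alpha+1}$ onto $V^{\alpha-1}$ with $\|A u(t)\|_{\alpha-1}$ equivalent to $\|u(t)\|_{\alpha+1}$, the bound $u\in L_2(0,\mu,V^{\alpha+1})$ directly yields $\nu A u\in L_2(0,\mu,V^{\alpha-1})$. The only delicate estimate is therefore the nonlinear term.

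To handle $B(\psi,u)$ I would invoke Lemma \ref{l2} with the exponents
\begin{equation*}
s_1 = 1+\alpha, \qquad s_2 = \alpha-1, \qquad s_3 = 1-\alpha,
\end{equation*}
so that $s_2+1=\alpha$ matches the regularity of $u$ available in $L_\infty(0,\mu,V^\alpha)$. A quick check shows the hypotheses of Lemma \ref{l2} are satisfied for $\alpha>1/2$: indeed $s_1+s_2 = 2\alpha \ge 0$, $s_1+s_3 = 2 \ge 0$, $s_2+s_3 = 0\ge 0$, and $s_1+s_2+s_3 = 1+\alpha > 3/2$. This gives
\begin{equation*}
\|B(\psi(t),u(t))\|_{\alpha-1} \le c\,\|\psi(t)\|_{1+\alpha}\,\|u(t)\|_{\alpha},
\end{equation*}
and consequently
\begin{equation*}
\int_0^\mu \|B(\psi(t),u(t))\|_{\alpha-1}^2\,dt \le c^2\,\|u\|_{L_\infty(0,\mu,V^\alpha)}^2\,\|\psi\|_{L_2(0,\mu,V^{1+\alpha})}^2 < \infty,
\end{equation*}
using the regularity obtained in Lemma \ref{l2b} together with the assumption on $\psi$.

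Combining these three bounds yields $u'\in L_2(0,\mu,V^{\alpha-1})$. Since $V^{\alpha+1}\subset V^\alpha \subset V^{\alpha-1}$ with continuous and dense embeddings and all three are Hilbert spaces, the cited trace theorem applies and gives $u\in C([0,\mu],V^\alpha)$. The main obstacle was to pick the exponents in Lemma \ref{l2} so that the right-hand side involves $\|u\|_\alpha$ (available in $L_\infty$) rather than $\|u\|_{1+\alpha}$ (only in $L_2$), since the latter would leave us with a product of two $L_2$ factors in time, insufficient to place $B(\psi,u)$ in $L_2(0,\mu,V^{\alpha-1})$.
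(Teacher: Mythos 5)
Your argument is correct and coincides with the paper's own proof: the same regularity $u\in L_2(0,\mu,V^{1+\alpha})$ from Lemma \ref{l2b}, the same choice $s_1=1+\alpha$, $s_2=\alpha-1$, $s_3=1-\alpha$ in Lemma \ref{l2} to place $B(\psi,u)$ in $L_2(0,\mu,V^{\alpha-1})$, and the same trace theorem (Temam, Lemma 1.2, Ch.\ 3). The only cosmetic difference is that the paper applies that theorem to $A^{\alpha/2}u$ in the triple $V^1\subset V^0\subset V^{-1}$ rather than directly in $V^{\alpha+1}\subset V^\alpha\subset V^{\alpha-1}$, which is equivalent.
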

\begin{proof}
We know from Lemma \ref{l2b}  that $u\in L_2(0,\mu,V^{1+\alpha})$. Furthermore, $\frac{d}{dt}u\in L_2(0,\mu,V^{\alpha-1})$, since straightforwardly $Au\in L_2(0,\mu,V^{\alpha-1})$ and, in virtue of Lemma \ref{l2}, taking $s_1=1+\alpha,\,s_2=\alpha-1,\,s_3=1-\alpha$, we obtain
\begin{equation*}
  \int_0^\mu\|B(\psi(r),u(r))\|_{\alpha-1}^2 dr\le c\|\psi\|_{L_2(0,\mu,V^{1+\alpha})}^2\|u\|_{L_\infty(0,\mu,V^{\alpha})}^2.
\end{equation*}
Hence, $A^{\alpha/2} u\in L_2(0,\mu,V^{1})$ and $\frac{d}{dt}A^{\alpha/2}u\in L_2(0,\mu,V^{-1})$.
Now, using Lemma 1.2, Chapter 3, Page 260 of \cite{Temam} we deduce that $A^{\alpha/2} u \in C([0,\mu],V^0)$, which completes the proof.
\end{proof}

Let $\mathcal{X}^\mu_\alpha$ be the Hilbert space $L_2(0,\mu,V^{1+\alpha})\times V^\alpha$.

\begin{theorem}\label{t3}
Consider the mapping
\begin{equation*}
  U:\mathcal{X}^\mu_\alpha \to \mathcal{X}^\mu_\alpha,\quad U(\psi,u_0)=(u,u(\mu))
\end{equation*}
where $u$ is the weak solution to \eqref{eq1}  for the function $\psi$ and initial condition $u_0$. Then, under the assumptions of Lemma \ref{l2b}, it follows that $U$ is a continuous mapping in $\mathcal{X}^\mu_\alpha$.
\end{theorem}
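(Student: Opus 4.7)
The plan is to fix a sequence $(\psi_n, u_{0,n})_{n\in\NN}$ converging to $(\psi, u_0)$ in $\mathcal{X}^\mu_\alpha$, denote by $u_n$ and $u$ the corresponding weak solutions to \eqref{eq1} provided by Lemma \ref{l2b}, and show that $u_n \to u$ in $L_2(0,\mu,V^{1+\alpha})$ together with $u_n(\mu) \to u(\mu)$ in $V^\alpha$. Setting $w_n := u_n - u$, bilinearity of $B$ yields
\begin{equation*}
  w_n^\prime + \nu A w_n + B(\psi_n - \psi, u) + B(\psi_n, w_n) = 0, \qquad w_n(0) = u_{0,n} - u_0,
\end{equation*}
as an equation in $L_2(0,\mu,V^{\alpha-1})$. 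Exactly as in the proof of Lemma \ref{l5b}, one checks $A^{\alpha/2} w_n \in L_2(0,\mu,V^1)$ with derivative in $L_2(0,\mu,V^{-1})$, so Lemma 1.2 in Chapter 3 of \cite{Temam} justifies testing with $A^\alpha w_n$ and produces the identity
\begin{equation*}
  \frac{1}{2}\frac{d}{dt}\|w_n\|_\alpha^2 + \nu \|w_n\|_{1+\alpha}^2 = -b(\psi_n - \psi, u, A^\alpha w_n) - b(\psi_n, w_n, A^\alpha w_n).
\end{equation*}

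The decisive step is estimating the first trilinear form. I would invoke Lemma \ref{l2} with the exponents $s_1 = 1+\alpha$, $s_2 = \alpha - 1$, $s_3 = 1-\alpha$ (all admissible since $\alpha > 1/2$) to obtain
\begin{equation*}
|b(\psi_n - \psi, u, A^\alpha w_n)| \le c \|\psi_n - \psi\|_{1+\alpha} \|u\|_\alpha \|w_n\|_{1+\alpha},
\end{equation*}
while the second is bounded as in Lemma \ref{l2b} by the choice $s_1 = 1+\alpha$, $s_2 = \alpha$, $s_3 = -\alpha$, giving $c \|\psi_n\|_{1+\alpha} \|w_n\|_{1+\alpha} \|w_n\|_\alpha$. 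Young's inequality then absorbs a fraction of $\nu\|w_n\|_{1+\alpha}^2$ into the dissipation and produces
\begin{equation*}
  \frac{d}{dt}\|w_n\|_\alpha^2 + \nu \|w_n\|_{1+\alpha}^2 \le C\|\psi_n\|_{1+\alpha}^2 \|w_n\|_\alpha^2 + C\|\psi_n - \psi\|_{1+\alpha}^2 \|u\|_\alpha^2,
\end{equation*}
for a constant $C$ depending only on $\nu$ and the trilinear constant.

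Integrating, applying Gronwall's lemma on $[0,\mu]$, and using both the $L_\infty(0,\mu,V^\alpha)$-bound on $u$ from Lemma \ref{l5b} and the uniform $L_2(0,\mu,V^{1+\alpha})$-bound on $(\psi_n)$, one obtains
\begin{equation*}
\sup_{t\in[0,\mu]}\|w_n(t)\|_\alpha^2 + \|w_n\|_{L_2(0,\mu,V^{1+\alpha})}^2 \le K\left(\|u_{0,n}-u_0\|_\alpha^2 + \|\psi_n - \psi\|_{L_2(0,\mu,V^{1+\alpha})}^2\right)
\end{equation*}
for a constant $K$ independent of $n$, and both required modes of convergence follow. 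The main obstacle is the trilinear exponent choice just made: the naive symmetric choice $s_1 = 1+\alpha$, $s_2 = \alpha$, $s_3 = -\alpha$ applied to $b(\psi_n - \psi, u, A^\alpha w_n)$ would leave a factor $\|u\|_{1+\alpha}$ which is only $L_2$ in time, so that the resulting source $\int_0^\mu \|\psi_n - \psi\|_{1+\alpha}^2 \|u\|_{1+\alpha}^2\,dr$ is a product of two $L_1(0,\mu)$ functions and need not vanish under mere $L_2(0,\mu,V^{1+\alpha})$-convergence of $\psi_n$. Shifting the extra derivative onto $w_n$, so that $u$ enters only through its $L_\infty(V^\alpha)$-bound, is what makes the Gronwall argument close.
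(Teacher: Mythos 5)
Your proof is correct and follows the same overall scheme as the paper's: write the equation for the difference $w_n=u_n-u$, split the nonlinearity as $B(\psi_n-\psi,u)+B(\psi_n,w_n)$, perform the $V^\alpha$ energy estimate, invoke Lemma \ref{l2} and Young's inequality, and close with Gronwall's lemma plus a time integration for the $L_2(0,\mu,V^{1+\alpha})$ component. The one genuine difference is the treatment of the cross term $b(\psi_n-\psi,u,A^\alpha w_n)$. You take $s_1=1+\alpha$, $s_2=\alpha-1$, $s_3=1-\alpha$ (admissible, since $s_2+s_3=0$ and $s_1+s_2+s_3=1+\alpha>3/2$), which shifts the extra derivative onto $w_n$ and measures $u$ only in $V^\alpha$; after absorbing $\|w_n\|_{1+\alpha}^2$ into the dissipation the source becomes $\|\psi_n-\psi\|_{1+\alpha}^2\|u\|_{L_\infty(0,\mu,V^\alpha)}^2$, which is handled by the $L_\infty(0,\mu,V^\alpha)$ bound of Lemma \ref{l2b}. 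The paper instead uses the symmetric choice $s_1=1+\alpha$, $s_2=\alpha$, $s_3=-\alpha$, getting $2c\|\psi_n-\psi\|_{1+\alpha}\|u\|_{1+\alpha}\|w_n\|_{\alpha}$, and then applies Young's inequality so that $\|u\|_{1+\alpha}^2$ multiplies $\|w_n\|_\alpha^2$ and is absorbed into the Gronwall exponent (where only $\|u\|_{1+\alpha}^2\in L_1(0,\mu)$ is needed), leaving the clean source $c^2\|\psi_n-\psi\|_{1+\alpha}^2$. So the ``obstacle'' you describe in your final paragraph is not actually an obstruction to the symmetric choice: it only arises if one insists on placing $\|u\|_{1+\alpha}^2$ in the source rather than in the Gronwall coefficient. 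Both routes deliver the same uniform-in-$n$ estimate, since $\|\psi_n\|_{L_2(0,\mu,V^{1+\alpha})}$ and hence the exponential factors stay bounded along a convergent sequence; your variant has the mild advantage of needing only the $L_\infty(0,\mu,V^\alpha)$ regularity of the limit solution, and your justification of the $V^\alpha$ energy identity via Lemma 1.2, Chapter 3 of \cite{Temam} applied to $A^{\alpha/2}w_n$ is slightly more careful than the paper's formal computation.
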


\begin{proof}
By Lemma \ref{l2b} and Lemma \ref{l5b} this mapping is well--defined. Let $(\psi^i,u_0^i)\in \mathcal{X}^\mu_\alpha$ and let $u^i$ for $i=1,\,2$ be the associated weak solutions.
Then, the distributional derivative exists
\begin{align*}
  \frac{d (u^1(t)-u^2(t))}{dt} & =A(u^1(t)-u^2(t))+B(\psi^1(t)-\psi^2(t),u^1(t))+B(\psi^2(t),u^1(t)-u^2(t))
\end{align*}
Hence, for the distributional derivative of $\|u^1(t)-u^2(t)\|_\alpha^2$ we have
\begin{align}\label{eq6}
\begin{split}
  \frac{d\|u^1(t)-u^2(t)\|_\alpha^2}{dt} &+2\nu\| u^1(t)-u^2(t)\|_{1+\alpha}^2\le 2c\|\psi^1(t)-\psi^2(t)\|_{1+\alpha}\|u^1(t)\|_{1+\alpha}\|u^1(t)-u^2(t)\|_{\alpha} \\
  & +2c\|\psi^2(t)\|_{1+\alpha}\|u^1(t)-u^2(t)\|_{1+\alpha}\|u^1(t)-u^2(t)\|_{\alpha},
\end{split}
\end{align}
where we have applied Lemma \ref{l2} with $s_1=1+\alpha,\,s_2=\alpha,\,s_3=-\alpha$ in order to estimate the terms related to the nonlinearity. From this inequality we derive
\begin{align*}
  \frac{d\|u^1(t)-u^2(t)\|_\alpha^2}{dt} & \le c^2 \|\psi^1(t)-\psi^2(t)\|_{1+\alpha}^2+\|u^1(t)\|_{1+\alpha}^2\|u^1(t)-u^2(t)\|_{\alpha}^2+\frac{c^2}{\nu}
  \|\psi^2(t)\|_{1+\alpha}^2\|u^1(t)-u^2(t)\|_{\alpha}^2.\\
\end{align*}
Applying Gronwall's lemma we obtain
\begin{align*}
  \|u^1(t)-u^2(t)\|_\alpha^2\le  & \|u^1(0)-u^2(0)\|_\alpha^2\exp\bigg(\int_0^t (\|u^1(r)\|_{1+\alpha}^2+\frac{c^2}{\nu}\|\psi^2(r)\|_{1+\alpha}^2) dr\bigg)\\
  &+
  c^2\int_0^t\exp\bigg(\int_s^t (\|u^1(r)\|_{1+\alpha}^2+\frac{c^2}{\nu}\|\psi^2(r)\|_{1+\alpha}^2)dr\bigg) \|\psi^1(s)-\psi^2(s)\|_{1+\alpha}^2 ds.
\end{align*}
Now integrating the inequality \eqref{eq6} from $0$ to $\mu$ we obtain
\begin{align*}
  \nu\|u^1-u^2\|_{L_2(0,\mu,V^{1+\alpha})}^2\le& \|u^1(0)-u^2(0)\|_\alpha^2 + c^2\| \psi^1-\psi^2\|_{L_2(0,\mu,V^{1+\alpha})}^2\\+ &\|u^1\|_{L_2(0,\mu,V^{1+\alpha})}^2\sup_{t\in [0,\mu]}\|u^1(t)-u^2(t)\|_\alpha^2
  +\frac{c^2}{\nu}\|\psi^2\|_{L_2(0,\mu,V^{1+\alpha})}^2\sup_{t\in [0,\mu]}\|u^1(t)-u^2(t)\|_\alpha^2.
\end{align*}
Now it suffices to consider a sequence $(\psi^n,u_0^n)_{n\in\NN} \subset \mathcal{X}^\mu_\alpha$ converging to $(\psi,u_0) \in \mathcal{X}^\mu_\alpha$, and let $(u^n)_{n\in\NN}$ be the associated sequence of weak solutions. Since $\|\psi^n\|_{L_2(0,\mu,V^{1+\alpha})}^2$ is bounded, by the last two inequalities we derive that $(u^n,u^n(\mu))_{n\in\NN}$ converges to $(u,u(\mu))$ in $\mathcal{X}^\mu_\alpha$, where $u$ is the weak solution related to the data $(\psi,u_0)\in \mathcal{X}^\mu_\alpha$.

\end{proof}

\section{The 3D Navier-Stokes Equations with Delay}\label{s4}
Let $\mu$ be a positive number. We consider the following version of the 3D Navier-Stokes equations with constant delay $\mu$:
\begin{align}\label{delay}
\begin{split}
  u^\prime(t,x) & +(u(t-\mu),\nabla)u(t)-\nu\Delta u(t)+\nabla p(t,x)=f(x), \\
   & {\rm div}\, u(t,x)=0,\quad u(0)=u_0(x),\quad u(\tau)=\phi(\tau),\quad \tau\in [-\mu,0).
\end{split}
\end{align}
Denote the solution of this equation depending on the time shift by $u^\mu$. Taking again the Helmholtz-projection into account we can formulate the equation as
 \begin{equation}\label{D2}
  \left\{
  \begin{aligned}
    du^\mu(t)&+(Au^\mu(t)+B(u^\mu(t-\mu), u^\mu(t)))dt=fdt ,\qquad&t \geq 0,&\\
    u^\mu(t)&=\phi (t),\qquad&t\in[-\mu,0),&\\
    u^\mu(0)&=u_{0}.
  \end{aligned}
  \right.
\end{equation}
\begin{definition}\label{def-deb}
Let $\mu>0$ and $\alpha>1/2$.
We are given $u_0\in {V^\alpha}, \phi \in L^2(-\mu, 0, V^{1+\alpha})$ and $f\in V^{{\alpha-1}}$.
We say that $u^\mu$ is a weak solution to system \eqref{D2}
on the time interval $[-\mu,T]$ if
\begin{align*}
u^\mu\in L_2(-\mu,T, V^{1+\alpha}),
\end{align*}
with $u^\mu(0)=u_0$, $u^\mu(t)=\phi (t)$ for $t\in[-\mu,0)$, and, given any $v\in V^{\alpha+1}$ and any test function $\varphi \in  C_0^\infty([0,T])$,
\begin{align}\label{var-v}
  -\int_0^T \langle u^\mu(r),v \rangle \varphi^\prime(r)dr  &+\nu\int_{0}^{T}\langle A^{1/2}u^\mu(r), A^{1/2} v\rangle \varphi(r) dr +\int_{0}^{T}\langle B(u^\mu(r-\mu),u^\mu(r)),v \rangle \varphi(r) dr
\nonumber\\
&= \int_{0}^{T}\langle f,v \rangle \varphi(r)dr.
\end{align}
\end{definition}

\begin{theorem}\label{t1}
Suppose that $\phi\in L_2(-\mu,0,V^{1+\alpha})$, $u_0\in V^\alpha$ and $f\in V^{\alpha-1}$. Then there exists a unique weak solution to \eqref{var-v} in the sense of Definition \ref{def-deb}. Furthermore, $u^\mu|_{[0,T]}\in C([0,T], V^\alpha)$ and, if $0\leq \gamma \leq 1/2$ and $s\geq 1$, we also obtain $u^\mu|_{[0,T]}\in L_\infty(0,T,V^\alpha)\cap C^\gamma([0,T],V^{-s})$, and $\frac{du^\mu}{dt}\in L_2(0,T,V^{\alpha-1})$.% and it is weakly continuous with values in $V^0$.
\end{theorem}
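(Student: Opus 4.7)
The strategy is the one outlined in the introduction: concatenate solutions of the linearized problem of Section \ref{s3} on successive intervals of length $\mu$. Since on $[0,\mu]$ the drift $u^\mu(t-\mu)$ equals the prescribed history $\phi(t-\mu)$, the system \eqref{D2} reduces to the linear system \eqref{eq1} with coefficient $\psi(t):=\phi(t-\mu)\in L_2(0,\mu,V^{1+\alpha})$. By Lemma \ref{l2b}, Lemma \ref{l5b} and Lemma \ref{l4} there exists a unique weak solution $u^\mu \in L_\infty(0,\mu,V^\alpha)\cap L_2(0,\mu,V^{1+\alpha})\cap C([0,\mu],V^\alpha)$ with $u^\mu(0)=u_0$, and by Lemma \ref{l3} also $u^\mu\in C^\gamma([0,\mu],V^{-s})$ for $0\le\gamma\le 1/2$, $s\ge 1$. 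In particular $u^\mu(\mu)\in V^\alpha$, which provides the initial datum for the next step.

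On $[\mu,2\mu]$ we repeat the argument with new coefficient $\psi(t):=u^\mu(t-\mu)$, which belongs to $L_2(\mu,2\mu,V^{1+\alpha})$ precisely by the $L_2(0,\mu,V^{1+\alpha})$-regularity from the previous step, and with initial datum $u^\mu(\mu)\in V^\alpha$. Lemma \ref{l2b} and Lemma \ref{l5b} again supply a unique weak solution on $[\mu,2\mu]$ with values in $V^\alpha$ at the endpoint. Iterating this procedure $n$ times with $n\mu\ge T$ and concatenating the pieces yields a function $u^\mu$ on $[-\mu,T]$ that is continuous with values in $V^\alpha$ at each junction point $k\mu$ (continuity is ensured because every step produces a function in $C([k\mu,(k+1)\mu],V^\alpha)$ with value $u^\mu(k\mu)$ at the left endpoint). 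The resulting function lies in $L_2(-\mu,T,V^{1+\alpha})\cap L_\infty(0,T,V^\alpha)\cap C([0,T],V^\alpha)$.

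To check that the concatenation satisfies \eqref{var-v} globally, I would take $\varphi\in C_0^\infty([0,T])$, split the time integral as $\sum_{k=0}^{n-1}\int_{k\mu}^{(k+1)\mu}$, apply the weak formulation from Section \ref{s3} (in the version \eqref{gal1}) on each subinterval, and observe that the boundary terms $(u^\mu(k\mu),v)\varphi(k\mu)$ telescope and vanish because $\varphi$ has compact support in $(0,T)$. The regularity $\frac{du^\mu}{dt}\in L_2(0,T,V^{\alpha-1})$ follows exactly as in the proof of Lemma \ref{l5b}: $Au^\mu\in L_2(0,T,V^{\alpha-1})$ is immediate, and by Lemma \ref{l2} with $s_1=1+\alpha$, $s_2=\alpha-1$, $s_3=1-\alpha$,
\begin{equation*}
\int_0^T\|B(u^\mu(r-\mu),u^\mu(r))\|_{\alpha-1}^2\,dr \le c\,\|u^\mu(\cdot-\mu)\|_{L_2(0,T,V^{1+\alpha})}^2\,\|u^\mu\|_{L_\infty(0,T,V^\alpha)}^2<\infty.
\end{equation*}
The Hölder regularity $C^\gamma([0,T],V^{-s})$ is obtained by concatenating the estimates provided by Lemma \ref{l3} on each piece.

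For uniqueness, given two weak solutions $u^{\mu,1}$ and $u^{\mu,2}$ with the same data, they must coincide on $[0,\mu]$ because they both solve the same linear equation with coefficient $\psi=\phi(\cdot-\mu)$, to which Lemma \ref{l4} applies. Then on $[\mu,2\mu]$ they solve the same linear equation with coefficient $u^{\mu,1}(\cdot-\mu)=u^{\mu,2}(\cdot-\mu)$ and the same initial value $u^{\mu,1}(\mu)=u^{\mu,2}(\mu)$, so Lemma \ref{l4} again forces equality; induction on the intervals $[k\mu,(k+1)\mu]$ finishes the argument. The main subtle point, and really the only one, is making sure the concatenated function genuinely lies in $L_2(-\mu,T,V^{1+\alpha})$ with $u^\mu(k\mu)\in V^\alpha$ at every junction so that Lemma \ref{l2b} is legitimately applicable at each step; this is exactly where the $V^\alpha$-continuity from Lemma \ref{l5b} is essential, and it is the reason the scheme propagates without loss of regularity.
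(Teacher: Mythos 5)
Your proposal is correct and follows essentially the same route as the paper: reduce to the linearized equation \eqref{eq1} on each interval $[k\mu,(k+1)\mu]$ with coefficient $\psi$ given by the previous piece, invoke Lemmas \ref{l3}, \ref{l4}, \ref{l2b} and \ref{l5b} to propagate existence, uniqueness and $V^\alpha$-regularity across the junctions, and verify \eqref{var-v} for the concatenation by splitting the time integral and letting the boundary terms $(u^\mu(k\mu),v)\varphi(k\mu)$ cancel via \eqref{gal1}. The paper writes out the telescoping explicitly only for two pieces and leaves the uniqueness induction implicit, but the substance is identical.
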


\begin{proof} Without loss of generality, assume that $T=k\mu$ for some $k\in \NN$.

In what follows, by induction we are going to construct a sequence $(u^\mu_n)_{n\in \NN}$ such that any $u_n^\mu$ is a weak solution to the delay problem (\ref{delay}) on $[-\mu,\mu]$, with initial condition $u_n^\mu(0)= u^\mu_{n-1}(\mu)$ and initial delay function $u_n^\mu(t-\mu)=u_{n-1}^\mu(t)$, for $t\in [0,\mu)$ (hence $t-\mu\in [-\mu,0)$).

{\bf Step 1:} Existence and uniqueness of the first element of the sequence, namely $u^\mu_1$, corresponding to $\phi\in L_2(-\mu,0,V^{1+\alpha})$ and $u_0\in V^\alpha$. For this first step we take
$$\psi(t):=u^\mu_1(t-\mu)=\phi(t-\mu) \quad \mbox{ for almost  all } t\in [0,\mu).$$
Due to the regularity of $\phi$, we have $\psi\in L_2(0,\mu,V^{1+\alpha})$ and therefore, as a consequence of Lemma \ref{l3}, Lemma \ref{l4} and Lemma \ref{l2b}, there exists a unique weak solution $u^\mu_1 \in L_2(-\mu,\mu,V^{1+\alpha}) \cap L_\infty(0,\mu,V^\alpha)\cap C^\gamma ([0,\mu], V^{-s})$ to (\ref{delay}) corresponding to $u_0$ and $\phi$.

Moreover, in virtue of Lemma \ref{l5b}, such solution fulfills $u^\mu_1 \in C([0,\mu],V^\alpha)$ and thus $u^\mu_1(\mu) \in V^\alpha$.\\

{\bf Step $i$:} Let us assume that we have already obtained the elements $u_j^\mu$ for $j=1,\cdots, i-1$. In particular, $u^\mu_{i-1} \in L_2(-\mu,\mu,V^{1+\alpha}) \cap L_\infty(0,\mu,V^\alpha) \cap C^\gamma ([0,\mu], V^{-s}) \cap C([0,\mu], V^\alpha)$, such that in particular $u^\mu_{i-1}(\mu) \in V^\alpha$.

We would like to obtain the new element of the sequence $u_i^\mu$, defined on $[-\mu,\mu]$ and with initial condition $u_i^\mu(0)=u^\mu_{i-1}(\mu)$ and delay function $u^\mu_i(t-\mu)=u^\mu_{i-1}(t)$ for almost all $t\in [0,\mu)$. Then, for almost all $t\in [0,\mu)$ choosing now
$$\psi(t):=u^\mu_i(t-\mu)=u^\mu_{i-1}(t) \in L_2(0,\mu,V^{1+\alpha}),$$
we obtain $u^\mu_{i} \in L_2(-\mu,\mu,V^{1+\alpha}) \cap L_\infty(0,\mu,V^\alpha)\cap C^\gamma ([0,\mu],V^{-s})$, as a direct result of Lemma \ref{l3}, Lemma \ref{l4} and Lemma \ref{l2b} (note that $u_i^\mu(0)\in V^\alpha$). Lemma \ref{l5b} implies $u^\mu_i \in C([0,\mu], V^\alpha)$.

Now it suffices to define the function $u$ given by
\begin{align}
\label{sol}
u^\mu(t)=\left\{\begin{array}{lll}
    \phi(t)&\mbox{ if}& t\in[-\mu,0),\\
           u_0&\mbox{ if}& t=0,\\
        u^\mu_1(t)&\mbox{ if}& t\in [0,\mu],\\
     u^\mu_2(t-\mu)&\mbox{ if}& t\in [\mu,2\mu],\\
     \vdots&&\vdots \\
      u^\mu_k(t-(k-1)\mu)&\mbox{ if}& t\in [(k-1)\mu,T].
    \end{array}
    \right.
\end{align}
Then $u^\mu\in L_2(-\mu,T,V^{1+\alpha})$, $u^\mu|_{[0,T]}\in L_\infty(0,T,V^\alpha)\cap C^\gamma([0,T],V^{-s})$.

It remains to prove that $u^\mu$ given by (\ref{sol}) satisfies (\ref{var-v}). To simplify the presentation, let us assume that only the two first pieces $u^\mu_1$ and $u^\mu_2$ come into play.

Take a test function $\varphi \in C_0^\infty([0,2\mu])$. Since $u^\mu_1$ is a solution on $[-\mu,\mu]$ corresponding to the initial delay function $\phi$, thanks to (\ref{gal1}) and taking into account that $\varphi(0)=0$, we obtain
\begin{align}\label{5}
\begin{split}
-\int_0^\mu \langle u_1^\mu(r),v \rangle \varphi^\prime(r)dr&+\nu\int_{0}^{\mu}\langle A^{1/2}u_1^\mu(r), A^{1/2} v\rangle \varphi(r) dr\\&+\int_{0}^{\mu}\langle B(\phi(r-\mu),u_1^\mu(r)),v \rangle \varphi(r) dr-\int_{0}^{\mu}\langle f,v \rangle \varphi(r)dr\\
&=-(u^\mu_1(\mu),v)\varphi(\mu).
\end{split}
\end{align}
On the other hand, since $u_2^\mu$ is a solution on $[-\mu, \mu]$, such that $u_2^\mu(t-\mu)=u_{1}^\mu(t)$, for almost all $t\in [0,\mu)$, for a test function $\hat \varphi\in C^\infty([0,\mu])$ to be determined later, thanks to (\ref{gal1}) we have
\begin{align*}
\begin{split}
-\int_0^\mu \langle u_2^\mu(r),v \rangle \hat \varphi^\prime(r)dr&+\nu\int_{0}^{\mu}\langle A^{1/2}u_2^\mu(r), A^{1/2} v\rangle \hat \varphi(r) dr\\&+\int_{0}^{\mu}\langle B(u_1^\mu(r),u_2^\mu(r)),v \rangle \hat \varphi(r) dr-\int_{0}^{\mu}\langle f,v \rangle \hat \varphi(r)dr\\
&=(u^\mu_2(0),v)\hat \varphi(0)-(u^\mu_2(\mu),v)\hat \varphi(\mu),
\end{split}
\end{align*}
or equivalently
\begin{align*}
-\int_\mu^{2\mu} & \langle u^\mu_2(r-\mu),v \rangle \hat \varphi^\prime(r-\mu)dr+\nu\int_{\mu}^{2\mu}\langle A^{1/2}u_2^\mu(r-\mu), A^{1/2} v\rangle \hat \varphi(r-\mu) dr\\
&+\int_{\mu}^{2\mu}\langle B(u_1^\mu(r-\mu),u_2^\mu(r-\mu)),v \rangle \hat \varphi(r-\mu) dr-\int_{\mu}^{2\mu}\langle f,v \rangle \hat \varphi(r-\mu)dr\\
&=(u^\mu_2(0),v)\hat \varphi(0)-(u^\mu_2(\mu),v)\hat \varphi(\mu),
\end{align*}
hence, taking as test function $\hat \varphi(r-\mu):=\varphi(r), \, r\in [\mu, 2\mu]$, since  then $\hat \varphi(\mu)=\varphi(2\mu)=0$ we have
\begin{align}\label{6}
\begin{split}
-\int_\mu^{2\mu} \langle u^\mu_2(r-\mu),v\rangle \varphi^\prime(r)dr&+\nu\int_{\mu}^{2\mu}\langle A^{1/2}u_2^\mu(r-\mu), A^{1/2} v\rangle \varphi(r) dr\\
&+\int_{\mu}^{2\mu}\langle B(u_1^\mu(r-\mu),u_2^\mu(r-\mu)),v \rangle \varphi(r) dr-\int_{\mu}^{2\mu}\langle f,v \rangle \varphi(r)dr\\
&=(u^\mu_2(0),v) \varphi(\mu),
\end{split}
\end{align}
therefore, summing (\ref{5}) and (\ref{6}), taking into account the definition of $u^\mu$, since by construction $u^\mu_2(0)=u^\mu_1(\mu)$ we obtain
\begin{align*}
  -\int_0^{2\mu} \langle u^\mu(r),v \rangle \varphi^\prime(r)dr  &+\nu\int_{0}^{2\mu}\langle A^{1/2}u^\mu(r), A^{1/2} v\rangle \varphi(r) dr +\int_{0}^{2\mu}\langle B(u^\mu(r-\mu),u^\mu(r)),v \rangle \varphi(r) dr
\nonumber\\
&= \int_{0}^{2\mu}\langle f,v \rangle \varphi(r)dr,
\end{align*}
and this completes the proof.

\end{proof}

\begin{remark}
As we have shown, under the conditions of Theorem \ref{t1}, the weak solution $u^\mu \in L_2(-\mu,T,V^{1+\alpha})\cap L_\infty(0,T,V^\alpha)\cap C^\gamma([0,T],V^{-s})\cap C([0,T],V^{\alpha})$, with $0\leq \gamma \leq 1/2$, $s\geq 1$ and $\alpha >1/2$. However, the estimates in the previous spaces are not uniform in $\mu$. Nevertheless, the estimates in $L_2(-\mu,T,V^{1})$ and $L_\infty(0,T,V^0)$ are uniform in $\mu$, which follows by the estimates obtained in Lemma \ref{l3}. For a uniform estimate of the solution in the space of H\"older continuous functions see the Lemma \ref{l6}. Let us emphasize that, in order to obtain uniform estimates in $C^\gamma([0,T],V^{-s})$, we have to pay the price of replacing the space $V^{-s}$, $s\geq 1$, with $V^{-s}$ being $s> 3/2$. This uniform estimate will be used in Section \ref{s6} to pass to the limit when $\mu\to 0$.
\end{remark}

\begin{lemma}\label{l6}
Under the conditions of Theorem \ref{t1}, the weak solution $u$ is uniformly bounded in $\mu$ in the space $C^\gamma([0,T],V^{-s})$, for $0\leq \gamma \leq 1/2$ and $s> 3/2$.
\end{lemma}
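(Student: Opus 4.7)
The plan is to establish a uniform-in-$\mu$ H\"older estimate of exponent $1/2$ in $V^{-s}$; the remaining cases $\gamma\in[0,1/2)$ follow by the embedding $C^{1/2}\hookrightarrow C^\gamma$, together with the uniform $L_\infty(0,T,V^{-s})$-bound obtained from $V^0\hookrightarrow V^{-s}$ and the uniform $L_\infty(0,T,V^0)$-bound. Starting from the integrated form
\begin{equation*}
  u^\mu(t)-u^\mu(\tau)=(t-\tau)f-\nu\int_\tau^t Au^\mu(r)\,dr-\int_\tau^t B(u^\mu(r-\mu),u^\mu(r))\,dr,
\end{equation*}
valid in $V^{-s}$ for $0\le\tau<t\le T$ (since $s>3/2>1-\alpha$ the identity $u^\mu(t)-u^\mu(\tau)=\int_\tau^t\dot u^\mu(r)\,dr$ holds a fortiori in $V^{-s}$), I take $V^{-s}$-norms and use Cauchy--Schwarz on each time-integral to extract a factor $(t-\tau)^{1/2}$. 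This reduces the task to bounding $Au^\mu$ and $B(u^\mu(\cdot-\mu),u^\mu)$ uniformly in $\mu$ in $L_2(0,T,V^{-s})$.

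For the Stokes term, the condition $s>3/2$ gives $2-s<1/2\le 1$ and hence $V^1\hookrightarrow V^{2-s}$, so $\|Au^\mu(r)\|_{-s}=\|u^\mu(r)\|_{2-s}\le c\|u^\mu(r)\|_1$; the uniform-in-$\mu$ bound of $\|u^\mu\|_{L_2(-\mu,T,V^1)}$ stated in the preceding remark (an immediate consequence of the energy identity together with the cancellation $b(u^\mu(r-\mu),u^\mu(r),u^\mu(r))=0$ of Lemma \ref{l5}) closes this term.

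The main step is the nonlinear term. I plan to apply Lemma \ref{l2} with the asymmetric choice $s_1=1$, $s_2=-1$, $s_3=s$: the strict inequality $s>3/2$ yields both $s_2+s_3=s-1>1/2\ge 0$ (so all pairwise sums are nonnegative) and $s_1+s_2+s_3=s>3/2$, and Lemma \ref{l2} produces
\begin{equation*}
  \|B(u^\mu(r-\mu),u^\mu(r))\|_{-s}\le c\|u^\mu(r-\mu)\|_1\,\|u^\mu(r)\|_0.
\end{equation*}
Squaring and integrating gives
\begin{equation*}
  \int_0^T\|B(u^\mu(r-\mu),u^\mu(r))\|_{-s}^2\,dr\le c^2\|u^\mu\|_{L_\infty(0,T,V^0)}^2\,\|u^\mu\|_{L_2(-\mu,T,V^1)}^2,
\end{equation*}
and both factors are uniform in $\mu$: the first directly from the remark, and the second because $\|u^\mu\|_{L_2(-\mu,0,V^1)}=\|\phi\|_{L_2(-\mu,0,V^1)}\le c\|\phi\|_{L_2(-\mu,0,V^{1+\alpha})}$ is an a priori fixed datum (the rest $\|u^\mu\|_{L_2(0,T,V^1)}$ being again controlled by the remark).

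The crux of the matter, and the reason the statement has to require $s>3/2$ rather than the $s\ge 1$ that sufficed in Lemma \ref{l3}, is the handling of the delayed argument $u^\mu(r-\mu)$ on the pre-history interval, where it equals $\phi$, together with the fact that all the higher-regularity bounds of Theorem \ref{t1} (namely in $L_\infty(0,T,V^\alpha)$ and $L_2(-\mu,T,V^{1+\alpha})$) are \emph{not} uniform in $\mu$. The asymmetric choice of exponents $s_1=1$, $s_2=-1$ is exactly what transfers the only spatial derivative in the trilinear estimate onto the delay factor, which is always controlled by the uniform $L_2(-\mu,T,V^1)$-bound, while the present state is used only through the uniform $L_\infty(0,T,V^0)$-bound; a symmetric choice such as $s_1=s_2=0$ would instead require an $L_\infty$-bound on $\phi$ in $V^0$, which the hypotheses do not supply.
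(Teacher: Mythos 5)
Your proof is correct and follows essentially the same route as the paper's: the integrated equation, Cauchy--Schwarz in time to extract the factor $(t-\tau)^{1/2}$, the embedding $V^1\subset V^{2-s}$ for the Stokes term, and the uniform $L_\infty(0,T,V^0)$ and $L_2(-\mu,T,V^1)$ bounds for the nonlinearity. The only deviation is your choice $s_1=1$, $s_2=-1$, $s_3=s$ in Lemma \ref{l2} where the paper states $s_1=s_2=0$, $s_3=s$; your choice is the one that literally yields the bound $c\|u(r-\mu)\|_1\|u(r)\|_0$ with the $V^1$-norm placed on the delayed factor (which is only controlled in $L_2$ in time through $\phi$), so your observation about the necessity of this asymmetric placement is well taken.
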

\begin{proof}
Let us assume that $s> \frac{3}{2} $ and $0\leq \tau<t\leq T$, then
\begin{align*}
  \|u(t)-u(\tau)\|_{-s}\le&\nu (t-\tau)^\frac12\bigg(\int_0^T\|Au(r)\|_{-s}^2dr\bigg)^\frac12\\
  &+(t-\tau)^\frac12\bigg(\int_0^T\| B(u((r-\mu),u(r))\|_{-s}^2dr\bigg)^\frac12.
\end{align*}
On the one hand, similar to the proof in Lemma \ref{l3}, the estimate of $ \int_0^T\|Au(r)\|_{-s}^2dr $ follows. Furthermore, by Lemma \ref{l1}, with the choice $s_1=s_2=0,\,s_3=s>3/2$, we obtain
\begin{align*}
  \|B(u(r-\mu),u(r))\|_{-s}\le c\|u(r-\mu)\|_{1}\|u(r)\|_{0}.
\end{align*}
Putting all the estimates together completes the proof.
\end{proof}

\section{ The semigroup generated by the solution of (\ref{D2})}\label{s5}

From now on, denote by $\mathcal Y^\mu_\alpha$ the space $L_2(-\mu,0,V^{1+\alpha})\times V^\alpha$.
\begin{lemma}
The weak solution $u$ to (\ref{D2}) corresponding to the initial data $(\phi,u_0)\in \mathcal Y^\mu_\alpha$ defines a  semigroup $S(t):\mathcal Y^\mu_\alpha \rightarrow \mathcal Y^\mu_\alpha$, for $t\geq 0$,  given by
\begin{equation}\label{sem}
S(t)(\phi,u_0)=(u_t^\mu, u^\mu(t)),
\end{equation}
where $u_t^\mu$ is the segment function defined by $u_t^\mu(s)=u^\mu(t+s)$, $s\in (-\mu,0)$.
\end{lemma}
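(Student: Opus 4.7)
The plan is to verify the three defining properties of a semigroup: well-posedness of the map $S(t)$ into $\mathcal{Y}^\mu_\alpha$, the identity property $S(0)=\mathrm{Id}$, and the concatenation property $S(t+r)=S(t)\circ S(r)$. Well-posedness and $S(0)=\mathrm{Id}$ will be immediate from Theorem \ref{t1} and the definition, so the real content is the semigroup property, which will rest entirely on uniqueness of weak solutions.

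First I would record that $S(t)$ is well defined: given $(\phi,u_0)\in\mathcal{Y}^\mu_\alpha$, Theorem \ref{t1} produces (for any horizon $T\geq t$) a weak solution $u^\mu\in L_2(-\mu,T,V^{1+\alpha})$ with $u^\mu|_{[0,T]}\in C([0,T],V^\alpha)$; in particular the segment $u^\mu_t$ lies in $L_2(-\mu,0,V^{1+\alpha})$ and $u^\mu(t)\in V^\alpha$, so $S(t)(\phi,u_0)\in\mathcal{Y}^\mu_\alpha$. The equality $S(0)(\phi,u_0)=(\phi,u_0)$ is then immediate from the definition of $u^\mu$ on $[-\mu,0]$.

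For the semigroup property, fix $r,t\geq 0$ and let $u^\mu$ denote the weak solution from $(\phi,u_0)$ on $[-\mu,r+t]$. Define the time-shifted function $v(\sigma):=u^\mu(\sigma+r)$ for $\sigma\in[-\mu,t]$. I would check that $v$ is a weak solution to \eqref{D2} on $[-\mu,t]$ with initial datum $(u^\mu_r,u^\mu(r))$: the initial segment matches by the definition of $u^\mu_r$, and the variational identity \eqref{var-v} for $v$ on $[0,t]$ is obtained from that of $u^\mu$ on $[r,r+t]$ by the change of variable $\sigma=\rho-r$ applied to a test function $\varphi\in C_0^\infty([0,t])$ (translated to a test function supported in $[r,r+t]$). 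The delay term transforms correctly because $u^\mu(\sigma+r-\mu)=v(\sigma-\mu)$, which is exactly the delayed value relative to $v$.

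Now let $\tilde u^\mu$ denote the weak solution produced by Theorem \ref{t1} starting from initial data $(u^\mu_r,u^\mu(r))\in\mathcal{Y}^\mu_\alpha$. By the uniqueness assertion in Theorem \ref{t1}, $\tilde u^\mu\equiv v$ on $[-\mu,t]$. Hence
\begin{equation*}
S(t)\,S(r)(\phi,u_0)=S(t)(u^\mu_r,u^\mu(r))=(\tilde u^\mu_t,\tilde u^\mu(t))=(v_t,v(t))=(u^\mu_{t+r},u^\mu(t+r))=S(t+r)(\phi,u_0),
\end{equation*}
which establishes the semigroup identity. The main (though mild) obstacle is ensuring that the time-translated function $v$ genuinely satisfies the weak formulation in Definition \ref{def-deb} with the correct initial segment; once this translation invariance of \eqref{var-v} is verified, uniqueness closes the argument at once.
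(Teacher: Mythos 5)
Your proposal is correct, and it proves the semigroup property by running the argument in the opposite direction from the paper. The paper takes the two \emph{short} solutions --- $u^1$ on $[-\mu,\tau]$ from $(\phi,u_0)$ and $u^2$ on $[-\mu,t]$ from $(u^1_\tau,u^1(\tau))$ --- glues them into a candidate on $[-\mu,\tau+t]$, and verifies that the glued function is a weak solution on the long interval; this verification is exactly the concatenation computation of Theorem \ref{t1} (the boundary terms $(u^1(\tau),v)\varphi(\tau)$ and $(u^2(0),v)\varphi(\tau)$ must cancel at the junction), and the paper simply refers back to it. You instead start from the \emph{long} solution $u^\mu$ on $[-\mu,r+t]$ and show that its time translate $v(\sigma)=u^\mu(\sigma+r)$ is a weak solution on $[-\mu,t]$ with data $(u^\mu_r,u^\mu(r))$; since your translated test functions vanish near $r$ and $r+t$, no boundary-term bookkeeping at the junction is needed, so the verification reduces to a change of variables in \eqref{var-v} plus the observation that the delay term transforms covariantly, $u^\mu(\sigma+r-\mu)=v(\sigma-\mu)$. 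Both arguments close with the uniqueness assertion of Theorem \ref{t1}, applied on the long interval in the paper's version and on the short interval in yours. Your route is the more economical of the two given that Theorem \ref{t1} is already available; the paper's route has the minor advantage of reusing the gluing machinery verbatim and of exhibiting the long solution explicitly as the concatenation. One small point worth making explicit in your write-up: the segment $u^\mu_r$ does lie in $L_2(-\mu,0,V^{1+\alpha})$ and $u^\mu(r)\in V^\alpha$ precisely because Theorem \ref{t1} gives $u^\mu\in L_2(-\mu,T,V^{1+\alpha})$ and $u^\mu|_{[0,T]}\in C([0,T],V^\alpha)$, so the intermediate data $(u^\mu_r,u^\mu(r))$ is admissible for restarting; you state this for well-posedness of $S(t)$ but it is equally what legitimizes the composition $S(t)\,S(r)$.
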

\begin{proof} To simplify the proof, we drop off the superscript $\mu$.

First of all, it is straightforward to check that $S(0)={\rm Id}_{\mathcal Y^\mu_\alpha}$.\\
In order to prove the semigroup property, namely, $S(t+\tau)(\phi,u_0)=S(t) (S(\tau)(\phi,u_0))$, for $t,\, \tau \geq 0$, it is sufficient to consider $u^1$ to be the solution to (\ref{D2}) on $[-\mu,\tau]$, with initial data $(\phi,u_0)\in \mathcal Y^\mu_\alpha$, and $u^2$ to be the solution to (\ref{D2}) on $[-\mu,t]$, with initial data $(u^1_\tau,u^1(\tau))\in \mathcal Y^\mu_\alpha$. Now, for almost all $\theta\in [-\mu,\tau+t]$ we define the function $u$ as follows
\begin{align}  \label{u}
u(\theta)=\left\{\begin{array}{lll}
    u^1(\theta)&\mbox{ if}& \theta\in[-\mu,\tau],\\
        u^2(\theta-\tau)&\mbox{ if}& \theta\in [\tau,\tau+t].\\
    \end{array}
    \right.
\end{align}
Let us observe that if $\theta\in [\tau-\mu,\tau]$ then $u^1(\theta)=u^2(\theta-\tau)$, or equivalently, $u^1(\tau+s)=u^1_\tau(s)=u^2(s)$, for almost all $s\in [-\mu,0]$, which follows from the fact that the initial delay function for  $u^2$ is given by $u^1_\tau$.

In order to prove the semigroup property we simply need to show that $u$ is a solution on $[-\mu, t+\tau]$, which is similar to the concatenation property proved in Theorem \ref{t1}, hence we omit the proof.

\end{proof}

\begin{remark}\label{r1}
There exists a clear relation between the mapping $U$ given in Theorem \ref{t3} and the semigroup $S(\cdot)$. Let us recall that $\mathcal{X}^\mu_\alpha=L_2(0,\mu,V^{1+\alpha})\times V^\alpha$ and $U(\psi,u_0)=(u,u(\mu))$ for $(\psi,u_0)\in \mathcal{X}^\mu_\alpha$.

If now $(\phi,u_0)\in \mathcal Y^\mu_\alpha$, then
\begin{align*}
S(\mu)(\phi(\cdot),u_0)=(u_\mu(\cdot),u(\mu))=(u(\mu+\cdot),u(\mu))=U(\psi(\cdot),u_0),
\end{align*}
which relies on the fact that if $\phi\in L_2(-\mu,0,V^{1+\alpha})$, then $\psi(\cdot):=\phi(\cdot-\mu)$ is such that $\psi \in L_2(0,\mu,V^{1+\alpha})$.

\end{remark}

Note that since confusion is not possible, we have dropped off the superscript $\mu$ from  the notation of the solution $u^\mu$.
\begin{lemma} \label{l7}
For $t\geq 0$, the semigroup $S(t)$ given by (\ref{sem}) is continuous on $\mathcal Y^\mu_\alpha$.
\end{lemma}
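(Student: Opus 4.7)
The plan is to reduce continuity of $S(t)$ at an arbitrary $t\ge 0$ to the continuity of the linearized solution operator $U:\mathcal X^\mu_\alpha\to\mathcal X^\mu_\alpha$ already established in Theorem \ref{t3}. The bridge is Remark \ref{r1}, which identifies $S(\mu)$ with $U$ via the shift $\psi(\cdot)=\phi(\cdot-\mu)$, together with the semigroup identity $S(k\mu+t')=S(t')\circ S(\mu)^k$, which handles times beyond the first delay window.

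The core step is to prove continuity of $S(t)$ for $t\in[0,\mu]$. Given $(\phi^n,u_0^n)\to(\phi,u_0)$ in $\mathcal Y^\mu_\alpha$, I would set $\psi^n(r):=\phi^n(r-\mu)$ for $r\in[0,\mu]$, which converges to $\psi(r):=\phi(r-\mu)$ in $L_2(0,\mu,V^{1+\alpha})$. By the concatenation construction from Theorem \ref{t1}, the corresponding solution coincides with $\phi^n$ on $[-\mu,0)$ and with the linearized solution $u^{\mu,n}_1$ of \eqref{eq1} driven by $\psi^n$ with initial datum $u_0^n$ on $[0,\mu]$, and analogously for the limit. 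Theorem \ref{t3} then yields $u^{\mu,n}_1\to u^\mu_1$ in $L_2(0,\mu,V^{1+\alpha})$; moreover, inspecting its proof, the Gronwall estimate after \eqref{eq6} in fact gives the uniform bound $\sup_{s\in[0,\mu]}\|u^{\mu,n}_1(s)-u^\mu_1(s)\|_\alpha\to 0$, so the second component $u^n(t)\to u(t)$ in $V^\alpha$ is immediate.

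For the first component I would split the segment $u^n_t$ on $[-\mu,0]$ into the interval $[-\mu,-t]$, where the solution equals $\phi^n(t+\cdot)$, and $[-t,0]$, where it equals $u^{\mu,n}_1(t+\cdot)$. A change of variables then bounds $\|u^n_t-u_t\|_{L_2(-\mu,0,V^{1+\alpha})}^2$ by the sum of $\int_{t-\mu}^0\|\phi^n(r)-\phi(r)\|_{1+\alpha}^2\,dr$ and $\int_0^t\|u^{\mu,n}_1(r)-u^\mu_1(r)\|_{1+\alpha}^2\,dr$, both dominated by their full norms and hence tending to zero. This settles continuity on $[0,\mu]$.

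For general $t\ge 0$, write $t=k\mu+t'$ with an integer $k\ge 0$ and $t'\in[0,\mu)$, so that $S(t)=S(t')\circ S(\mu)^k$ by the semigroup property established in the previous lemma. Each factor is continuous on $\mathcal Y^\mu_\alpha$ by the previous step---note in particular that $S(\mu)$ does map $\mathcal Y^\mu_\alpha$ into itself, which is a consequence of the regularity $u^\mu|_{[0,\mu]}\in L_2(0,\mu,V^{1+\alpha})$ supplied by Lemma \ref{l2b}---and composition of continuous maps between metric spaces is continuous. The main (mild) obstacle I anticipate is the careful bookkeeping of the segment splitting on $[0,\mu]$; beyond that the argument is an immediate consequence of Theorem \ref{t3} and the semigroup identity.
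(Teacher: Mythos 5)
Your proposal is correct and follows essentially the same route as the paper: reduce to $t\in[0,\mu]$ via the semigroup identity and the identification $S(\mu)=U$ from Remark \ref{r1}, observe that on $[0,\mu]$ the delayed term is just $\phi(\cdot-\mu)$ so the Gronwall estimates of Theorem \ref{t3} apply (including the sup-in-time bound on $\|u^1(t)-u^2(t)\|_\alpha$ needed for the second component at intermediate times), and split the segment norm $\|u^1_t-u^2_t\|_{L_2(-\mu,0,V^{1+\alpha})}$ into the $\phi$-part on $[t-\mu,0]$ and the solution part on $[0,t]$. The paper rewrites the nonlinear difference explicitly in terms of $\phi^1,\phi^2$ and redoes the estimate rather than citing Theorem \ref{t3} verbatim, but the computations are identical.
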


\begin{proof}
Given $t\geq 0$, there exists $k\in \NN$ such that $t\in [k\mu, (k+1)\mu]$. Due to the relationship between the mapping $U$ and the semigroup $S$, see Remark \ref{r1}, we have that
\begin{align*}
S(t)(\phi(\cdot),u_0) &=S(t-k\mu) S(k\mu)(\phi(\cdot),u_0)=S(t-k\mu) (S(\mu)\circ \cdots \circ S(\mu))(\phi(\cdot),u_0)\\
&= S(t-k\mu)(U\circ \cdots \circ U) (\psi(\cdot),u_0).
\end{align*}
We proved previously in Theorem \ref{t3} that $U$ is a continuous map in $\mathcal{X}^\mu_\alpha$. Since $0\leq t-k\mu\leq \mu$, it is sufficient to prove the continuity of $S$ on the time interval $[0,\mu]$.

Let us consider $S(t)$ for $t\in [0,\mu]$, let $(\phi^i,u_0^i)\in \mathcal{Y}^\mu_\alpha$, and let $u^i$ for $i=1,\,2$ be the associated weak solutions.
Note that because $t\in[0,\mu]$ the nonlinear term can be handle as follows:
\begin{align*}
B(u^1(t-\mu),u^1(t))-B(u^2(t-\mu),u^2(t))&=B(u^2(t-\mu),u^1(t)-u^2(t))+B(u^1(t-\mu)-u^2(t-\mu),u^1(t))\\
&=B(\phi^2(t-\mu),u^1(t)-u^2(t))+B(\phi^1(t-\mu)-\phi^2(t-\mu),u^1(t)).
\end{align*}

Then, in a similar way to the proof of Theorem \ref{t3}, we arrive at
\begin{align*}
  \|u^1(t)-u^2(t)\|_\alpha^2\le  & \|u^1(0)-u^2(0)\|_\alpha^2\exp\bigg(\int_0^t(\|u^1(r)\|_{1+\alpha}^2+\frac{c^2}{\nu}{\|\phi^2(r-\mu)\|_{1+\alpha}^2})dr\bigg)\\
  +&
  c^2\int_0^t\exp\bigg(\int_s^t(\|u^1(r)\|_{1+\alpha}^2+\frac{c^2}{\nu}\|\phi^2(r-\mu)\|_{1+\alpha}^2)dr\bigg) {\|\phi^1(s-\mu)-\phi^2(s-\mu)\|_{1+\alpha}^2} ds.
\end{align*}
and
\begin{align*}
  \nu\|u^1-u^2\|_{L_2(0,\mu,V^{1+\alpha})}^2& \leq \| u^1(0)-u^2(0)\|_\alpha^2 + c^2\|\phi^1-\phi^2\|_{L_2(-\mu,0,V^{1+\alpha})}^2\\
  &+ \|u^1\|_{L_2(0,\mu,V^{1+\alpha})}^2\sup_{t\in [0,\mu]}\|u^1(t)-u^2(t)\|_\alpha^2+ \frac{c^2}{\nu}\|\phi^2\|_{L_2(-\mu,0,V^{1+\alpha})}^2\sup_{t\in [0,\mu]}\|u^1(t)-u^2(t)\|_\alpha^2.
\end{align*}
From this last inequality, we obtain a suitable inequality for $ \|u^1_t-u^2_t\|^2_{L_2(-\mu,0,V^{1+\alpha})}$, since
\begin{align*}
 \|u^1_t-u_t^2\|^2_{L_2(-\mu,0,V^{1+\alpha})}&= \int_{-\mu}^0  \|u^1(t+r)-u^2(t+r)\|^2_{1+\alpha} dr \\
  &= \int_{t-\mu}^0  \|u^1(r)-u^2(r)\|^2_{1+\alpha} dr+\int_0^t  \|u^1(r)-u^2(r)\|^2_{1+\alpha} dr\\
  &\leq  \|\phi^1-\phi^2 \|^2_{L_2(-\mu,0,V^{1+\alpha})}+ \| u^1-u^2\|^2_{L_2(0,\mu,V^{1+\alpha})}.
\end{align*}
Now it suffices to consider a sequence $(\phi^n,u_0^n)_{n\in\NN} \subset \mathcal Y^\mu_\alpha$ converging to $(\phi,u_0) \in \mathcal Y^\mu_\alpha$. Let $(u^n)_{n\in\NN}$ be the associated sequence of weak solutions corresponding to $(\phi^n,u_0^n)_{n\in\NN}$, while that $u$ denotes the solution of (\ref{var-v}) corresponding to $(\phi,u_0)$. Since $\|\phi^n\|_{L_2(-\mu,0,V^{1+\alpha})}^2$ is bounded, by the last inequalities we derive that $(u^n_t,u^n(\mu))_{n\in\NN}$ converges to $(u_t,u(\mu))$ in $\mathcal Y^\mu_\alpha$.

\end{proof}

\section{The convergence $\mu\rightarrow 0$}\label{s6}
In this section we would like to study the behavior of the solution $\um$ when the delay $\mu$ tends to zero. To be a bit more precise, we will prove that one can extract a subsequence, still denoted $\um$, that converges to $u$ in some appropriate sense and that $u$ is a weak solution to the 3D Navier-Stokes equations associated to the initial condition $u_{0}$.

\begin{theorem}\label{mu-conv}
Given $T>0$, let $\{\mu_n\}_{n\in \NN}$ a sequence such that $\lim_{n\to \infty} \mu_n=0$. Assume that $\phi^{\mu_n} \in L_2(-{\mu_n},0,V^{1+\alpha})$, such that $\sup_{n\in \NN}\|\phi^{\mu_n}\|_{L_2(-{\mu_n},0,V^{1+\alpha})}<\infty$. Let $\{u^{\mu_n}_0\}_{n\in \NN} \subset V^\alpha$ , such that w--$\lim_{n\to \infty} u_0^{\mu_n}=u_0$ in $V^0$, and $f\in V^{\alpha-1}$. If $u^{\mu_n}$ denotes the corresponding weak solution of \eqref{D2} on $[-\mu_n,T]$ with $u^{\mu_n}(0)=u^{\mu_n}_0$ and $u^{\mu_n}(t)=\phi^{\mu_n}(t)$ for almost all $t\in [-\mu_n,0)$, then we can extract a subsequence that converges in $L_2(0,T,V^0)\cap C([0,T],V^{-s})$, $s>3/2$, to a limit $u$ when $n$ goes to infinity. Moreover, the limit
$u$ is a weak solution of the 3D Navier-Stokes equations, namely, given any $v\in V^1$ and any test function $\varphi \in  C_0^\infty([0,T])$ then
\begin{align}\label{weak-nse}
  -\int_0^T \langle u(r),v \rangle \varphi^\prime(r)dr  &+\nu\int_{0}^{T}\langle A^{1/2}u(r), A^{1/2}v\rangle  \varphi(r)dr +\int_{0}^{T}\langle B(u(r),u(r)),v \rangle \varphi(r) dr
\nonumber\\
&= \int_{0}^{T}\langle f,v \rangle \varphi(r)dr
\end{align}
with $u(0)=u_0\in V^0$.%, the weak limit of $u_0^\mu$  in $V^0$.{\color{red} to be continued}
\end{theorem}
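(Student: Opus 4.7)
The plan is a standard weak-compactness passage to the limit: derive $\mu_n$-uniform a priori bounds for $\{u^{\mu_n}\}$, extract a convergent subsequence by compactness, and pass to the limit term by term in the weak formulation \eqref{var-v}. Testing \eqref{D2} against $u^{\mu_n}$ in $V^0$ and using Lemma \ref{l5}---the cancellation $b(u^{\mu_n}(r-\mu_n),u^{\mu_n}(r),u^{\mu_n}(r))=0$ still holds because the first slot is divergence free---one obtains
\begin{equation*}
\sup_{n\in\NN}\Big(\|u^{\mu_n}\|_{L_\infty(0,T,V^0)}+\|u^{\mu_n}\|_{L_2(0,T,V^1)}\Big)<\infty,
\end{equation*}
using only $\sup_n\|u_0^{\mu_n}\|_0<\infty$ (a consequence of weak convergence in $V^0$). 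Combined with the uniform H\"older estimate of Lemma \ref{l6} in $C^\gamma([0,T],V^{-s})$, $s>3/2$, a Dubinskij/Aubin--Lions compactness argument (as in the proof of Lemma \ref{l3}) then produces a subsequence, still denoted $\{u^{\mu_n}\}$, and a limit $u\in L_\infty(0,T,V^0)\cap L_2(0,T,V^1)\cap C([0,T],V^{-s})$ converging strongly in $L_2(0,T,V^0)\cap C([0,T],V^{-s})$, weakly in $L_2(0,T,V^1)$, and weakly-$\ast$ in $L_\infty(0,T,V^0)$. Evaluating the $C([0,T],V^{-s})$ limit at $t=0$ together with $u_0^{\mu_n}\rightharpoonup u_0$ in $V^0\hookrightarrow V^{-s}$ identifies $u(0)=u_0$.

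The linear terms of \eqref{var-v} pass to the limit immediately, so the essential step is to show, first for smooth $v$ (say $v\in V^{3/2+\eta}$) and then by density for $v\in V^1$, that
\begin{equation*}
\int_0^T\langle B(u^{\mu_n}(r-\mu_n),u^{\mu_n}(r)),v\rangle\varphi(r)\,dr\longrightarrow\int_0^T\langle B(u(r),u(r)),v\rangle\varphi(r)\,dr.
\end{equation*}
Writing $w^n(r):=u^{\mu_n}(r-\mu_n)$, extended by $\phi^{\mu_n}(r-\mu_n)$ on $[0,\mu_n]$, I would use
\begin{equation*}
b(w^n,u^{\mu_n},v)-b(u,u,v)=b(w^n-u,u^{\mu_n},v)+b(u,u^{\mu_n}-u,v).
\end{equation*}
The term $b(u,u^{\mu_n}-u,v)$ is handled via $b(u,u^{\mu_n}-u,v)=-b(u,v,u^{\mu_n}-u)$ together with Lemma \ref{l2} and the strong $L_2(V^0)$ convergence of $u^{\mu_n}-u$. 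For $b(w^n-u,u^{\mu_n},v)$ I would split the time integral at $\mu_n$: on $[\mu_n,T]$ a change of variables $r\mapsto r-\mu_n$ and continuity of translations in $L_2(0,T,V^0)$ yield $\|w^n-u\|_{L_2(\mu_n,T,V^0)}\to 0$, which combined with Lemma \ref{l2} and the uniform $L_2(V^1)$ bound on $u^{\mu_n}$ closes that piece; on $[0,\mu_n]$, integration by parts plus Lemma \ref{l2} with indices $s_1=1+\alpha$, $s_2=0$, $s_3=0$ (admissible because $\alpha>1/2$ makes $s_1+s_2+s_3>3/2$), combined with Cauchy--Schwarz, produces a bound of order
\begin{equation*}
\sqrt{\mu_n}\,\|\phi^{\mu_n}\|_{L_2(-\mu_n,0,V^{1+\alpha})}\|u^{\mu_n}\|_{L_\infty(0,T,V^0)},
\end{equation*}
which vanishes as $n\to\infty$; an analogous estimate disposes of $\int_0^{\mu_n}b(u,u^{\mu_n},v)\varphi\,dr$.

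The main obstacle I foresee is precisely this nonlinear passage on the shrinking initial segment $[0,\mu_n]$: the interval collapses, yet the delay datum $\phi^{\mu_n}$ is only uniformly bounded (not tending to zero) in $L_2(-\mu_n,0,V^{1+\alpha})$, so one must carefully redistribute the derivatives in the trilinear form via integration by parts in order to extract a $\sqrt{\mu_n}$ gain from Cauchy--Schwarz. The hypothesis $\alpha>1/2$ enters here essentially, through the admissible range of indices in Lemma \ref{l2}. By contrast, the weak-$V^0$ convergence of $u_0^{\mu_n}$---rather than, say, strong $V^\alpha$ convergence---is harmless, since the initial value of $u$ is recovered through the $V^{-s}$ evaluation at $t=0$.
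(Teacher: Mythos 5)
Your proposal is correct and follows the same skeleton as the paper: $\mu$-uniform energy bounds in $L_\infty(0,T,V^0)\cap L_2(0,T,V^1)$ (valid because $b(\psi,w,w)=0$ makes the energy estimate delay-independent), the uniform H\"older bound of Lemma \ref{l6}, Dubinskij compactness, identification of $u(0)$, and a term-by-term passage to the limit. Where you genuinely diverge is in the treatment of the nonlinear term. The paper splits the difference of the trilinear forms into four pieces $I_1,\dots,I_4$, and the delicate one is $I_3=\int_{\mu_n}^T\langle B(u(r-\mu_n)-u(r),u(r)),v\rangle\varphi\,dr$, which it handles by the \emph{weak} continuity of $u$ in $V^0$ (via the Helmholtz projection of $\kappa_j=\sum_i\partial_{x_j}v_i\,u_i\,\varphi$) together with dominated convergence. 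You instead absorb the time shift into $b(w^n-u,u^{\mu_n},v)$ and invoke \emph{strong} continuity of translations in $L_2(0,T,V^0)$, i.e.\ $\|u(\cdot-\mu_n)-u(\cdot)\|_{L_2(\mu_n,T,V^0)}\to 0$; combined with the uniform $L_2(V^1)$ bound on $u^{\mu_n}$ and Lemma \ref{l2} with $(s_1,s_2,s_3)=(0,0,s)$, $s>3/2$, this closes the term. Your route is arguably more elementary and quantitative, since continuity of translations is a textbook fact and yields strong convergence where the paper only extracts a pointwise weak limit; the paper's argument, on the other hand, avoids any appeal to translation continuity and reuses machinery (weak $V^0$-continuity) already established in Lemma \ref{l3}. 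Your handling of the collapsing segment $[0,\mu_n]$ — antisymmetry $b(a,b,v)=-b(a,v,b)$ to move the derivative onto $v$, then Cauchy--Schwarz to extract the $\sqrt{\mu_n}$ factor against the merely bounded $\|\phi^{\mu_n}\|_{L_2(-\mu_n,0,V^{1+\alpha})}$ — matches the paper's estimate of $I_4$ in substance, and your explicit density step to pass from $v\in V^{3/2+\eta}$ to $v\in V^1$ is a point the paper leaves implicit. Both proofs are sound.
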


\begin{proof}
Using the results of the previous sections,  we have that the sequence $u^{\mu_n}$ is uniformly bounded in $L_\infty(0,T,V^0)\cap L_2(0,T,V^1)\cap C^\gamma([0,T],V^{-s})$, for $0\leq \gamma\leq1/2$ and $s> 3/2$, which is compactly embedded in
$L_2(0,T,V^0)\cap C([0,T],V^{-s})$. Hence, we deduce that there exists a subsequence still denoted $u^{\mu_n}$ such that
\begin{align*}
  &\lim_{n\to \infty}\|u^{\mu_n}-u\|_{L_2(0,T,V^0)}=0,\\
  w-&\lim_{n\to \infty}u^{\mu_n}=u\quad \text{in }L_2(0,T,V^1),\\
  &\lim_{n\to \infty}\|u^{\mu_n}(t)-u(t)\|_{-s}=0,\quad \forall t\in [0,T],\\
  &\lim_{n\to \infty} \langle u^{\mu_n}(t)-u(t), v \rangle =0,\quad \forall t\in [0,T], \, v \in V^s, \, s>3/2.
\end{align*}
In particular,
\begin{align*}
0=\lim_{n\to \infty}\|u^{\mu_n}(0)-u(0)\|_{-s}=\lim_{n\to \infty}\|u^{\mu_n}_{0}-u(0)\|_{-s}.
\end{align*}
Since by assumption  $\lim_{n\to \infty}(u^{\mu_n}_0,v)=(u_0,v)$, $ \forall v\in V^0$,  we deduce that
$u(0)=u_0\in V^0$.

From the previous convergences, it is straightforward to obtain the following convergences when $n$ tends to infinity:

$$ -\int_0^T \langle u^{\mu_n}(r),v \rangle \varphi^\prime(r)dr  \rightarrow  -\int_0^T \langle u(r),v \rangle \varphi^\prime(r)dr  $$
and
$$ \int_{0}^{T}\langle A^{1/2}u^{\mu_n}(r), A^{1/2}v \rangle \varphi(r) dr \rightarrow
\int_{0}^{T}\langle A^{1/2}u(s), A^{1/2}v \rangle \varphi(r) dr.$$

For the nonlinear term, we can consider the following splitting:
\begin{align*}
&\int_{0}^{T}\bigg(\langle B(u^{\mu_n}(r-\mu_n),u^{\mu_n}(r)), v\rangle-\langle B(u(r),u(r)), v\rangle \bigg)\varphi(r)dr\\
=&\int_{0}^{T}\langle B(u^{\mu_n}(r-\mu_n), u^{\mu_n}(r)-u(r)), v\rangle \varphi(r)dr\\
&+\int_{\mu_n}^{T}\langle B(u^{\mu_n}(r-\mu_n)-u(r-\mu_n), u(r)), v\rangle \varphi(r)dr\\
&+\int_{\mu_n}^{T}\langle B(u(r-\mu_n)-u(r),u(r)),v\rangle \varphi(r)dr\\
&+\int_{0}^{\mu_n}\langle B(u^{\mu_n}(r-\mu_n)-u(r),u(r)), v\rangle \varphi(r)dr=:I_1+I_2+I_3+I_4.
\end{align*}

Now it suffices to apply in suitable ways Lemma \ref{l2}. Hence, choosing $s>3/2$,
\begin{align*}
|I_1|&\leq \int_{0}^{T}\|u^{\mu_n}(r)-u(r)\|_{0}\|u^{\mu_n}(r-\mu_n)\|_{1}\|v\|_{s} \varphi(r)dr\\
&\leq \|v\|_{s} \|u^{\mu_n}\|_{L_2(-\mu_n,T,V^1)} \bigg(\int_{0}^{T}\|u^{\mu_n}(r)-u(r)\|^2_{0} dr \bigg)^{1/2}\left(\sup_{r\in[0,T]}|\varphi(r)| \right),
\end{align*}
that tends to zero when $n \to \infty$. Also, for $s>3/2$,
\begin{align*}
|I_2|&\leq \int_{\mu_n}^{T}\|u^{\mu_n}(r-\mu)-u(r-\mu_n)\|_{0}\|u(r)\|_{1}\|v\|_{s} \varphi(r)dr\\
&\leq \|v\|_{s} \|u\|_{L_2(0,T,V^1)} \bigg(\int_{0}^{T-\mu_n}\|u^{\mu_n}(r)-u(r)\|^2_{0} dr\bigg)^{1/2} \left(\sup_{r\in[0,T]}|\varphi(r)| \right)\\
&\leq \|v\|_{s} \|u\|_{L_2(0,T,V^1)} \bigg(\int_{0}^{T}\|u^{\mu_n}(r)-u(r)\|_{0}^2 dr\bigg)^{1/2} \left(\sup_{r\in[0,T]}|\varphi(r)| \right),
\end{align*}
which converges to zero when $n \to \infty$.

Now, in order to prove the convergence of $I_3$ we use the weakly continuity of $u$ in $V^0$, see Lemma 1.4, Chapter 3, Page 263 in \cite{Temam}. Indeed, we can express $I_3$ equivalently as
\begin{align*}
I_3=\int_{0}^{T} 1_{[\mu_n,
T]}(r) b(u(r-\mu_n)-u(r),v,u(r)) \varphi(r)dr.
\end{align*}
The integrand of that expression can be estimated by $2\|u\|_{L_\infty(0,T,V^0)} \|u(r)\|_1\|v\|_s |\varphi(r)|$, for $s>3/2$, which is integrable. 
\begin{align*}
\lim_{n \to \infty} |I_3|&=\int_{0}^{T} \lim_{n \to \infty} 1_{[\mu_n,T]}(r) b(u(r-\mu_n)-u(r),v,u(r)) \varphi(r)dr\\
&\leq \int_{0}^{T} \lim_{n \to \infty} 1_{[\mu_n,T]}(r) \sum_{i,j=1}^3 \left( \int_{\mathbb T_L^3}  (u_j(r-\mu_n,x)-u_j(r,x))\frac{\partial v_i(x)}{\partial x_j}  u_i(r,x) dx \right) \varphi(r)dr\\
&=\int_{0}^{T} \lim_{n \to \infty} 1_{[\mu_n,T]}(r) ( u(r-\mu_n)-u(r),\pi\kappa(r)   ) dr
\end{align*}
and this last limit is zero due to the weak continuity of $u$ in $V^0$ by choosing $\kappa_j(r,x)=\sum_{i}\frac{\partial v_i(x)}{\partial x_j}  u_i(r,x) \varphi(r) \in L_2(\TT_L^3)$, for almost every $r\in[0,T]$ by Lemma \ref{l1}. $\pi$ denotes the Helmholtz projection. In particular, we have for almost all $r$
\begin{equation*}
  \lim_{n\to\infty}( u(r-\mu_n)-u(r),\pi\kappa(r)   )=\lim_{n\to\infty}( u(r-\mu_n)-u(r),\kappa(r)   )_{L_2(\TT_L^3)^3}=0.
\end{equation*}
To ensure that $\pi\kappa(r)\in V^0$  for almost all $r$, it is  enough that $v\in V^s$ and $u(r)\in V^1$ for almost all $r$. The dominated convergence theorem
gives the convergence of $I_3$ to zero.
Finally we have
\begin{align*}
|I_4|&\leq \int_{0}^{\mu_n}\|u^{\mu_n}(r-\mu_n)-u(r)\|_{1}\|u(r)\|_{0}\|v\|_{s} \varphi(r)dr\\
&\leq \int_{0}^{\mu_n}\|\phi^{\mu_n}(r-\mu_n)\|_{1}\|u(r)\|_{0}\|v\|_{s} \varphi(r)dr+\int_{0}^{\mu_n}\|u(r)\|_{1}\|u(r)\|_{0}\|v\|_{s} \varphi(r)dr\\
&\leq {\mu_n}^{1/2} \left(\sup_{r\in[0,T]}|\varphi(r)| \right) \|v\|_{s}  \|u\|_{L_\infty(0,T,V)}  \left( \|\phi^{\mu_n}\|_{L_2(-\mu_n,0,V^1)} +  \|u\|_{L_2(0,T,V^1)}\right),
\end{align*}
hence the proof is complete.

\end{proof}

{\bf Aknowledgement:} This paper was partially finished while the authors were visiting the Center for Mathematical Sciences in Wuhan (China). We would like to thank Prof. J. Duan and all the staff of the center for a very warm hospitality.

Hakima Bessaih was partially supported by NSF grant DMS-1418838.

%\bibliography{3dnse_bjoern}
%\bibliographystyle{plain}
\end{document}